\newtheorem{proposition}{Proposition}[section]
\newtheorem{theorem}[proposition]{Theorem}
\newtheorem{corollary}[proposition]{Corollary}
\def\R{{\mathbb R}}
\def\P{{\mathbb P}}
\def\Z{{\mathbb Z}}
\makeatletter \@addtoreset{equation}{section} \makeatother
\newenvironment{example}{%
	\vspace{0.3cm} \pagebreak [2]%
	\par%
	\refstepcounter{proposition}%
	\noindent%
	{\bf  Example~\theproposition\ }}{\qed}%
\newenvironment{remark}{%
	\vspace{0.3cm} \pagebreak [2]%
	\par%
	\refstepcounter{proposition}
	\noindent%
	{\bf Remark~\theproposition\  }}{\qed}%
\begin{document}
	
\title {Cluster size distributions of discrete random fields}
\author{Dan Cheng}
\author{John Ginos}
\affil{Arizona State University}
	
	\date{}
	
	\maketitle
	
	\begin{abstract}
		We study discrete random fields $\{X_t: t\in \Z^d\}$ parameterized on the $d$-dimensional integer lattice $\Z^d$. For a fixed threshold $u$, the excursion set $\{t \in \Z^d : X_t > u\}$ decomposes into connected components or clusters, whose size, defined as the number of lattice points they contain, are random. This paper investigates the probability distribution of these cluster sizes. For stationary random fields, we derive exact expressions for the cluster size distribution. To address nonstationary settings, we introduce a peak-based cluster size distribution, which characterizes the distribution of cluster sizes conditional on the presence of a local maximum above $u$. This formulation provides a tractable alternative when exact cluster size distributions are analytically inaccessible. The proposed framework applies broadly to Gaussian and non-Gaussian random fields, relying only on their joint dependence structure. Our results provide a theoretical foundation for quantifying spatial extent in discretely sampled data, with applications to medical imaging, geoscience, environmental monitoring, and other scientific areas where thresholded random fields naturally arise.
	\end{abstract}
	
	\noindent{\small{\bf Keywords}: random fields; discrete lattice; spatial extent; connected components; cluster size; peak-based; stationary; nonstationary; threshold.}
	
	\noindent{\small{\bf Mathematics Subject Classification}:\ 	62M40, 62G32, 62E15, 60G70, 60G60.}
	
\section{Introduction}	
Random fields play a central role in modern spatial statistics, stochastic modeling, and signal detection. They provide a natural framework for characterizing spatially correlated data across a wide range of scientific disciplines, including medical imaging, geoscience, environmental studies, and cosmology \cite{AT07,Cheng:2020,Chiles:2012,Goeman:2023,MP11,Taylor:2007}.
When a random field is observed over a discrete lattice such as $\Z^d$, the resulting process is referred to as a \textit{discrete random field}. A fundamental problem in the analysis of such fields concerns the geometric and topological properties of regions where the field exceeds a fixed threshold level 
$u$. These regions, commonly called excursion sets, decompose into random connected components, or clusters, formed by neighboring lattice sites above the threshold.

Understanding the statistical distribution of cluster sizes in random fields is fundamental for both theory and applications. In spatial statistics and random field theory, the cluster size distribution under a null (pure noise) model provides a benchmark for assessing the significance of observed spatial patterns. For example, in neuroimaging, random field theory underlies cluster-based inference procedures for detecting activation regions in fMRI data \cite{Bansal:2018,Chumbley:2010,Friston:1994,Goeman:2023,Poline:1997,Worsley:1996a,Zhang:2009}. Similar principles arise in geoscience and cosmology, where cluster size distributions describe the spatial extent of anomalies in correlated data  \cite{Adler81,Bardeen:1985,Chiles:2012,Lindgren82,Vanmercke:2010}. Recent work in climate and environmental statistics has emphasized the importance of spatial extent inference, which aims to quantify the areal footprint of extreme events \cite{Tan:2021,Zhong:2025}. In parallel, inference for threshold exceedances in random fields has been applied to spatiotemporal extreme value analysis \cite{Fondeville:2018,Huser:2014}. 

While the geometry of excursion sets for continuous Gaussian fields has been extensively studied through tools such as the Gaussian kinematic formula and Euler characteristic heuristics \cite{AT07,Taylor:2007}, exact cluster-size distributions are typically intractable in continuous settings. In contrast, for discrete random fields on lattices, it is possible to formulate and compute cluster size distributions explicitly under suitable assumptions. The discrete setup allows a combinatorial characterization of connected components, leading to exact expressions in low dimensions and numerically tractable formulations in higher dimensions.

In this paper, we investigate the cluster size distribution of discrete stationary random fields $\{X_t: t\in \Z^d\}$ defined on the $d$-dimensional integer lattice $\Z^d$. We focus on the probabilistic structure of clusters that arise arising from thresholding the field at a fixed level $u\in \R$. Clusters are defined as connected components of the excursion set $\{t\in \Z^d: X_t>u\}$. Each cluster above the threshold $u$, denoted by $C_u$, has a random size $|C_u|$, defined simply as the number of lattice points in the component. Illustrations in one and two dimensions are provided in Figures \ref{fig:simul-1D} and \ref{fig:simul-2D}. Our first objective is to derive the \emph{exact cluster size distribution}
\begin{equation*}
	\begin{split}
		\P( |C_u|=k \ | \ \text{a cluster $C_u$ above $u$ exists}), \quad k=1,2,\ldots,
	\end{split}
\end{equation*}
for stationary random fields, where the covariance structure depends only on spatial lag.

Motivated by recent advances in peak inference for random fields \cite{CS15,CS18,CS17} and peaks-over-threshold methods \cite{Fondeville:2018,Huser:2014}, we further introduce and develop the \textit{peak-based cluster size distribution}. This characterizes the distribution of cluster sizes conditional on the presence of a local maximum above $u$. Specifically, we derive the conditional probability
\[
\P(|C_u(t)|=k \ | \ t \ \text{\rm is a local maximum and } X_t>u), \quad k=1,2,\ldots,
\]
where $C_u(t)$ denotes the cluster containing an observed local maximum $t$ that exceeds the threshold $u$. This formulation connects local peak statistics with cluster extent, bridging ideas from spatial extreme value theory and classical random field analysis. It is particularly valuable in nonstationary settings, where local variations in the covariance structure influence both the shape and spatial extent of clusters. In contrast, obtaining exact cluster size distributions in the nonstationary case appears analytically intractable.

In many applications, data are observed on discrete grids rather than as continuous fields. The framework developed in this paper thus provides a unified theoretical foundation for cluster modeling and inference in discrete random fields. Beyond its intrinsic theoretical interest, the results have broad practical implications for signal detection and spatial extent inference in correlated lattice data. Applications include neuroimaging \cite{Bansal:2018,Chumbley:2010,Friston:1994,Goeman:2023,Worsley:1996a}, geophysical anomaly detection \cite{Chiles:2012,Vanmercke:2010}, and climate data analysis \cite{Cooley:2007,Davison:2012}, where discretely sampled random fields are ubiquitous.

Finally, while Gaussian random fields serve as a convenient and widely used reference model, the general framework developed here does not require Gaussianity. As will become clear from the arguments below, the derivations depend only on the joint dependence structure of the field. Consequently, the methodology applies to broad classes of non-Gaussian random fields, provided their finite-dimensional distributions are well defined.

\section{The exact cluster size distribution}
We begin with stationary discrete random fields and derive exact expressions for their cluster size distributions. In general, the analysis of random fields indexed by $\Z^d$ is conceptually rich and technically demanding. To present the main ideas in a transparent manner, we first focus on the one-dimensional case, where the structure of clusters and the required joint probabilities admit explicit characterization.

\begin{figure}[t]
	\begin{center}
		\begin{tabular}{cc}
			\includegraphics[trim=8 8 25 5,clip,width=3.7in]{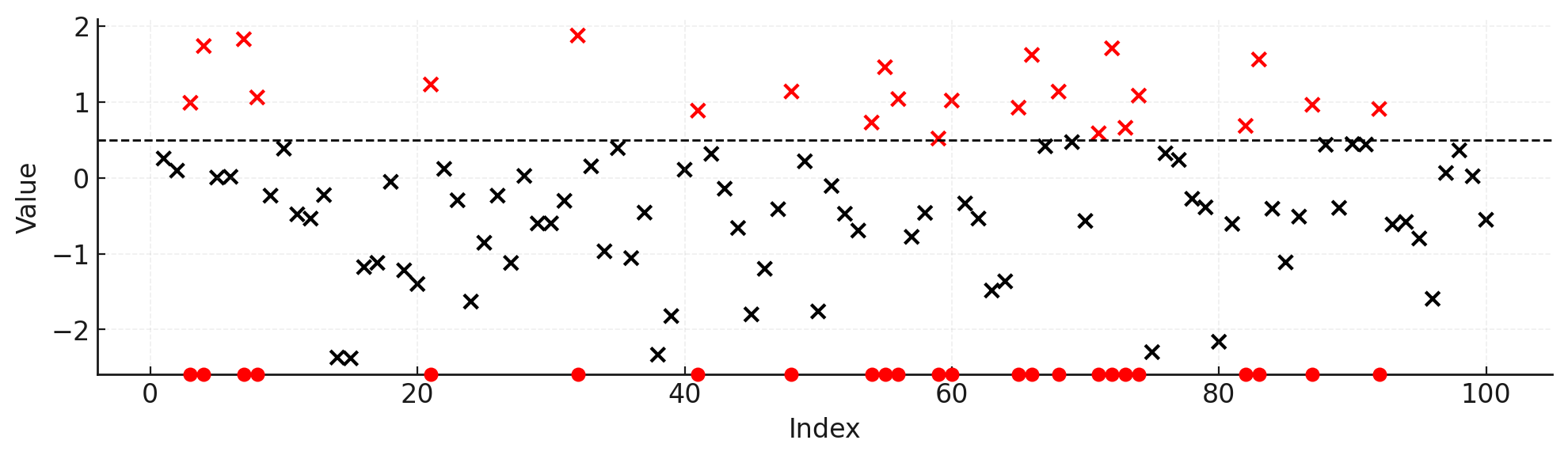} &  
			\includegraphics[trim=5 10 50 5,clip,width=2.2in]{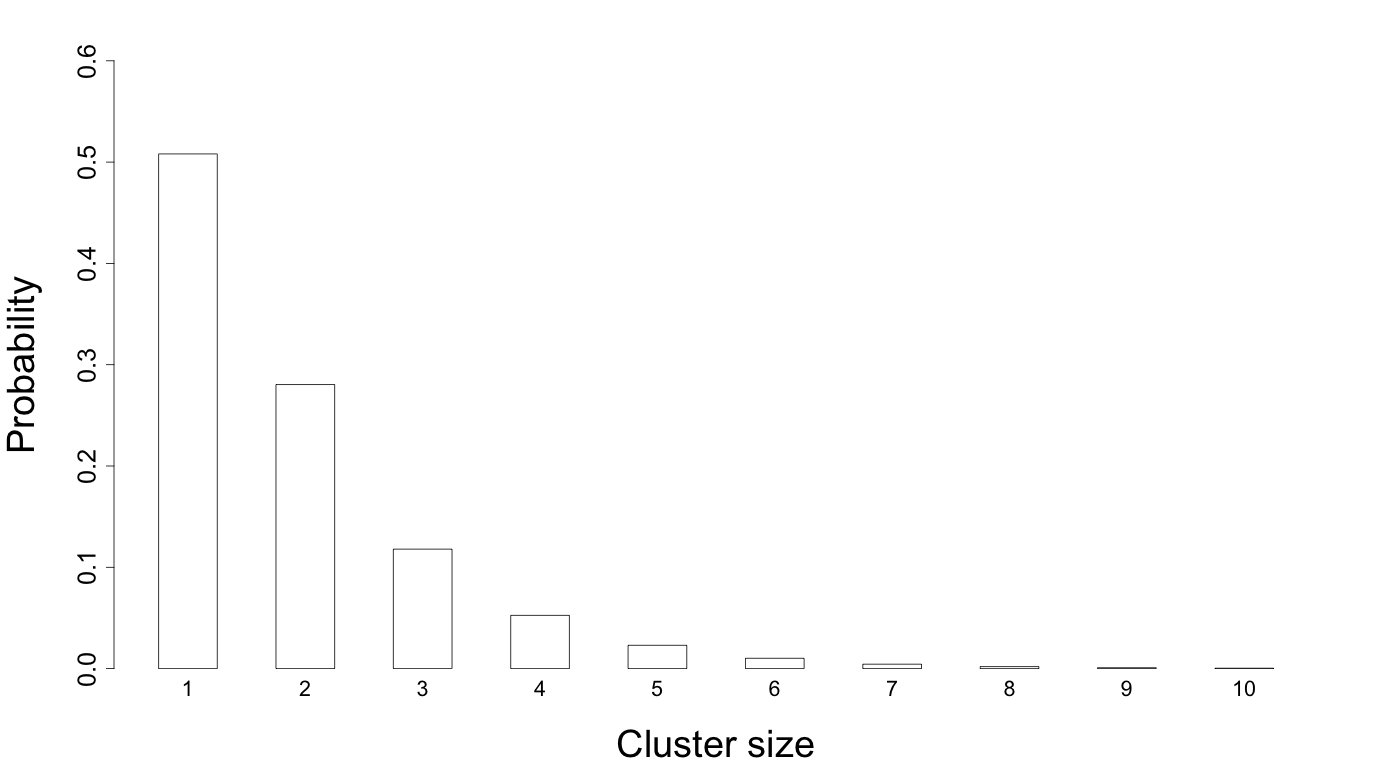} \\
			{\footnotesize Simulated Gaussian process and clusters above $u=0.5$} & {\footnotesize Empirical cluster size distribution}
		\end{tabular}
		\caption{\small \label{fig:simul-1D}  The left panel is the scatter plot of a simulated realization of a centered stationary Gaussian process with covariance $C(t,s)=e^{-(t-s)^2}$, where exceedances above the threshold $u=0.5$ are marked in red, and the corresponding clusters are marked in consecutive red dots along the index $\Z$. This realization exhibits 14 observed clusters: 7 of size one, 5 of size two, 1 of size three, and 1 of size four. The right panel displays the empirical cluster size distribution obtained from repeated simulations.}
	\end{center}
\end{figure}
 
\subsection{Stationary processes on $\Z$}
Let $\{X_t: t\in \Z\}$ be a real-valued centered stationary process on the one-dimensional integer lattice $\Z$. Fix a threshold $u\in \R$ and consider the excursion set
\[
\{t\in\Z: X_t>u\}.
\]
Its connected components (intervals of consecutive integers) are called \emph{clusters}. The size of a cluster is defined as the number of lattice points it contains. We denote the size of a cluster by $S_u$ and study the distribution
\[
\P(S_u=k), \qquad k=1,2,\ldots,
\]
which is interpreted as the probability that a cluster above level $u$ has size $k$, conditional on the event that such a cluster exists. Formally,
\begin{equation}\label{eq:CSD_def}
	\P(S_u=k)
	= \P(|C_u|=k \ | \ \text{a cluster $C_u$ above $u$ exists}),
\end{equation}
where $|C_u|$ denotes the size of a cluster $C_u$. Figure~\ref{fig:simul-1D} provides a visual illustration. The following theorem gives an exact closed-form expression for this cluster size distribution.

\begin{theorem}\label{thm:CSD_1D}
	Let $\{X_t: t\in \Z\}$ be a centered stationary process and let $u\in \R$ be a fixed threshold. Then, for $k=1,2,\ldots$,
	\begin{equation}\label{eq:CSD_1D}
		\begin{split}
			\P(S_u=k) = \frac{\P(X_{-1}\le u, X_0>u, X_1> u, \ldots, X_{k-1}> u, X_k\le u )}{\P(X_{-1}\le u, X_0>u )}.
		\end{split}
	\end{equation}
\end{theorem}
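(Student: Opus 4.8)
The plan is to identify the conditional law in \eqref{eq:CSD_1D} with the Palm-type distribution of a ``typical'' cluster, obtained by conditioning on the event that a cluster has its left endpoint pinned at a fixed site, and then to exploit stationarity so that the choice of site is immaterial. Concretely, a cluster has its left endpoint at $t$ precisely when $X_{t-1}\le u$ and $X_t>u$; by stationarity this event has probability $q:=\P(X_{-1}\le u,\,X_0>u)$ for every $t$, which I assume positive (otherwise there are a.s.\ no clusters and the statement is vacuous). Taking $t=0$ as the reference site, I would then note that, given $\{X_{-1}\le u,\,X_0>u\}$, the cluster whose left endpoint sits at $0$ has size at least $k$ iff $X_1>u,\ldots,X_{k-1}>u$, and size exactly $k$ iff in addition $X_k\le u$. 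Hence the conditional probability that this reference cluster has size $k$ equals $\P(X_1>u,\ldots,X_{k-1}>u,\,X_k\le u\mid X_{-1}\le u,\,X_0>u)$, which is precisely the right-hand side of \eqref{eq:CSD_1D}, since the intersection of the conditioning event with the conditioned event is exactly the numerator event there.

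The remaining point is to argue that this ``pin the left endpoint at the origin'' conditioning is the correct formalization of ``conditional on a cluster existing'' in \eqref{eq:CSD_def}. I would make this precise through a window-counting argument: for a large box $\Lambda_N=\{1,\dots,N\}$, let $M_N$ denote the number of clusters with left endpoint in $\Lambda_N$ and $M_{N,k}$ the number of those of size exactly $k$. By linearity of expectation and stationarity, $\E[M_{N,k}]=N\,p_k$, where $p_k:=\P(X_{-1}\le u,\,X_0>u,\,X_1>u,\dots,X_{k-1}>u,\,X_k\le u)$ is the numerator in \eqref{eq:CSD_1D}, while $\E[M_N]=N\,q$. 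The empirical fraction of size-$k$ clusters in the window therefore has the constant mean $p_k/q$ for every $N$, and this ratio is what \eqref{eq:CSD_def} is declaring to be $\P(S_u=k)$; equivalently, under a mild ergodicity assumption $M_{N,k}/M_N\to p_k/q$ almost surely, giving the same conclusion. Either way the normalizing constant is $q=\P(X_{-1}\le u,\,X_0>u)$, the denominator in \eqref{eq:CSD_1D}.

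I expect the only genuine subtlety to be the treatment of infinite (or window-truncated) clusters. The identity $\sum_{k\ge1}p_k=q$, equivalently $\sum_{k\ge1}\P(S_u=k)=1$, requires that the event $\{X_{-1}\le u,\ X_t>u\ \text{for all }t\ge0\}$ carry no mass, i.e.\ that there be a.s.\ no infinite cluster; this holds automatically in the natural settings (for instance whenever the field is ergodic with $\P(X_0\le u)>0$), and I would either impose it as a standing hypothesis or remark that otherwise \eqref{eq:CSD_1D} gives the law conditional on the cluster being finite. A secondary, purely bookkeeping, issue is that clusters meeting the boundary of $\Lambda_N$ contribute $O(1)$ in expectation, hence are negligible after dividing by $M_N=O(N)$ and do not affect the limiting ratio. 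Modulo these points the proof reduces to the short conditional-probability computation above, and, as emphasized in the introduction, no Gaussianity enters: only stationarity and the existence of the finite-dimensional distributions are used.
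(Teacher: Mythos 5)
Your proof is correct and follows essentially the same route as the paper: condition on the cluster's left endpoint being pinned at the origin via the event $\{X_{-1}\le u,\,X_0>u\}$, and observe that the cluster has size $k$ exactly when $X_1>u,\ldots,X_{k-1}>u,\,X_k\le u$. Your additional window-counting justification and the caveat about infinite clusters (needed for $\sum_k p_k=q$) are sound refinements that the paper treats separately in its empirical-distribution subsection or leaves implicit, but they do not change the argument.
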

\begin{proof}
	By stationarity, the size distribution of clusters is invariant under translations of the index set. Hence, it suffices to study clusters whose leftmost point is at the origin. The event that a cluster above $u$ starts at $0$ is exactly $\{X_{-1}\le u,\; X_0>u\}$.

	 Thus, the cluster size distribution can be expressed as the conditional probability
	\begin{equation*}
		\begin{split}
		\P(S_u=k) &= \P( |C_u|=k \ | \ \text{there exists a cluster $C_u$ above $u$ starting at 0}) \\
	&= \P( |C_u|=k \ | \ X_{-1}\le u, X_0>u), \quad k=1,2,\ldots.
\end{split}
\end{equation*}
Under this conditioning, the size of the cluster is the number of consecutive exceedances starting at 0 before the process falls back below $u$. Hence the cluster $C_u$ has size $k$ if and only if
\[
X_1>u, \ \ldots, \ X_{k-1}>u, \ X_k\le u.
\]
Formula \eqref{eq:CSD_1D} follows directly from the conditional probability rule.
\end{proof}

\begin{example}\label{example:WN_1D}
	Let $X_t$, $t\in \Z$, be a white noise process with common cumulative distribution function (CDF) $F$. Let $p=F(u)$ and $q=1-p$. Then, by \eqref{eq:CSD_1D}, 
	\begin{equation}\label{eq:WN_1D}
	\P(S_u=k) = \frac{p^2q^k}{pq} = pq^{k-1}, \quad k=1,2,\ldots.
    \end{equation}
	Thus, in the one-dimensional i.i.d. case, the cluster size has a geometric distribution.
\end{example}

\subsection{An alternative perspective via the empirical distribution}
We next demonstrate that Theorem~\ref{thm:CSD_1D} is consistent with the empirical cluster size distribution computed in simulations. Before doing so, we introduce an equivalent interpretation of the cluster size distribution that is particularly convenient for connecting theory with empirical estimation.

In one dimension, each cluster above $u$ is an interval of consecutive indices. By stationarity, all such clusters are statistically equivalent up to translation. We therefore designate the \emph{root} of a cluster as its leftmost point and translate so that this root is at the origin. A size-$k$ cluster then has the canonical index set $\{0,1,\ldots,k-1\}$.

Let $w_k$ denote the probability that there exists a size-$k$ cluster above $u$ with root at 0, i.e.,
\begin{equation}\label{eq:w_1D}
	\begin{split}
		w_k &=\P(\text{there exists a size-$k$ cluster above $u$ starting at 0})\\
		&=\P(X_{-1}\le u, X_0>u, X_{1}> u, \ldots, X_{k-1}> u, X_k\le u ).
	\end{split}
\end{equation}
The quantity $w_k$ represents the \textit{per-lattice density} of size-$k$ clusters. Consequently, in a domain of size $N$, the expected number of size-$k$ clusters above $u$ is $Nw_k$.

Summing $w_k$ over all over all possible sizes $k\ge 1$ yields the probability that there exists a cluster above $u$ with root at 0:
\begin{equation}\label{eq:w_1D_sum}
	\sum_{k=1}^\infty w_k = \P(\text{there exists a cluster above $u$ starting at 0})=\P(X_{-1}\le u, X_0>u ).
\end{equation}
Thus, $\sum_{k\ge1} w_k$ represents the per-lattice density of all clusters above $u$, and the expected number of clusters above $u$ in a domain of size $N$ is $N\sum_{k\ge1}w_k$.

Substituting \eqref{eq:w_1D} and \eqref{eq:w_1D_sum} into the definition of cluster size distribution \eqref{eq:CSD_def}, and using stationarity, we recover the expression in \eqref{eq:CSD_1D}:
\begin{equation*}
	\begin{split}
	\P(S_u=k) &= \frac{w_k}{\sum_{j=1}^\infty w_j}.
	\end{split}
\end{equation*}

We now connect this to the empirical distribution used in simulations. Suppose the observation window consists of $N$ lattice sites. For $k\ge 1$, the empirical probability mass function of the cluster size is given by
\begin{equation}\label{eq:empirical}
	\begin{split}
	&\quad \frac{\#\{\text{size-$k$ clusters above $u$}\}}{\#\{\text{all clusters above $u$} \}} = \frac{\#\{\text{size-$k$ clusters above $u$} \}/N}{\#\{\text{all clusters above $u$} \}/N} \\
	&\overset{a.s.}{\to} \frac{\text{average number of size-$k$ clusters per lattice site}}{\text{average number of clusters per lattice site}}\\
	&=\frac{w_k}{\sum_{j=1}^\infty w_j}, \qquad \text{as } N\to \infty,
	\end{split}
\end{equation}
where the convergence follows from the strong law of large numbers under stationarity and ergodicity. Intuitively, as the observation window grows, the empirical relative frequencies of cluster sizes converge to the theoretical cluster size distribution.

\subsection{Stationary random fields on $\Z^d$}

We now extend the analysis to a stationary random field on $\Z^d$. In higher dimensions, issues that were trivial in one dimension, such as the possible shapes of clusters and the ordering of points, require additional care.

\subsubsection{Basic notation}
Let $\{X_t: t\in \Z^d\}$ be a centered stationary random field indexed on the $d$-dimensional integer lattice $\Z^d$. Clusters are defined as connected components of the excursion set $\{t\in \Z^d: X_t>u\}$. To define connectivity we introduce two neighbourhood systems on $\Z^d$. 

Denote the origin by $o=(0, \ldots, 0)$, and let $e_1=(1,0,\ldots, 0)$, \ldots, $e_d=(0,0,\ldots, 1)$ be the standard basis of $\R^d$. For $t=(t_1,t_2, \ldots, t_d)\in \Z^d$, we define the \emph{nearest neighbor} set of $t$ by
\begin{equation}\label{eq:nearest}
\{t'\in \Z^d: \|t'-t\|= 1\} = \{t\pm e_i: i=1,\ldots, d\},
\end{equation}
where $\|\cdot\|$ denotes the Euclidean norm. Under nearest neighbor connectivity each site has exactly $2d$ neighbors.

For completeness, we also introduce the \emph{Moore neighbor} set of $t$:
\begin{equation}\label{eq:Moore}
	\{t'\in \Z^d: \|t'-t\|_{\infty}= 1\},
\end{equation}
where $\|(t_1, \ldots, t_d)\|_{\infty}=\max_{1\le i\le d} |t_i|$ denotes the maximum norm. Thus, the Moore neighbor includes also diagonals and contains $3^d-1$ points. Figure \ref{fig:neighbor} illustrates these two neighborhood systems for $d=2$.

\begin{figure}[t]
	\begin{center}
		\begin{tabular}{cc}
			\includegraphics[trim=0 160 0 0,clip,width=1.5in]{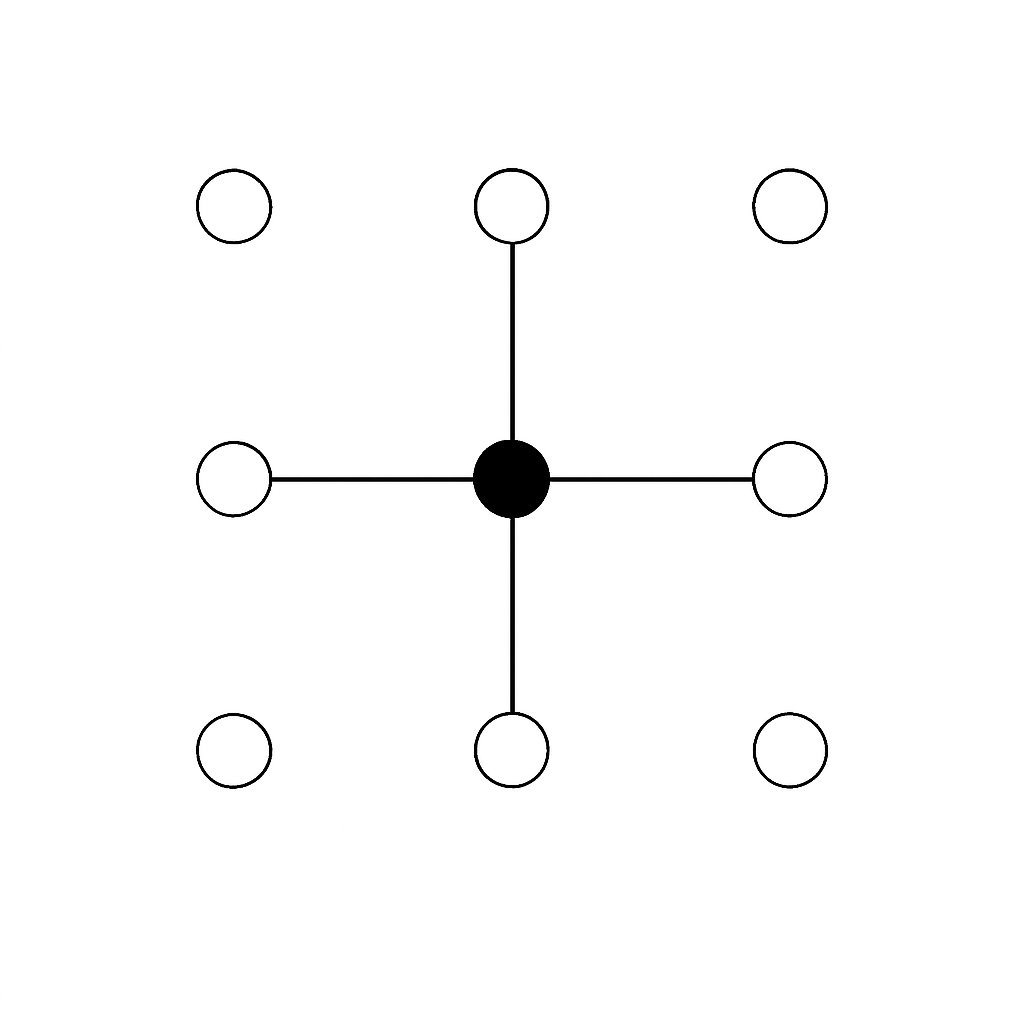} &
			\includegraphics[trim=0 160 0 0,clip,width=1.5in]{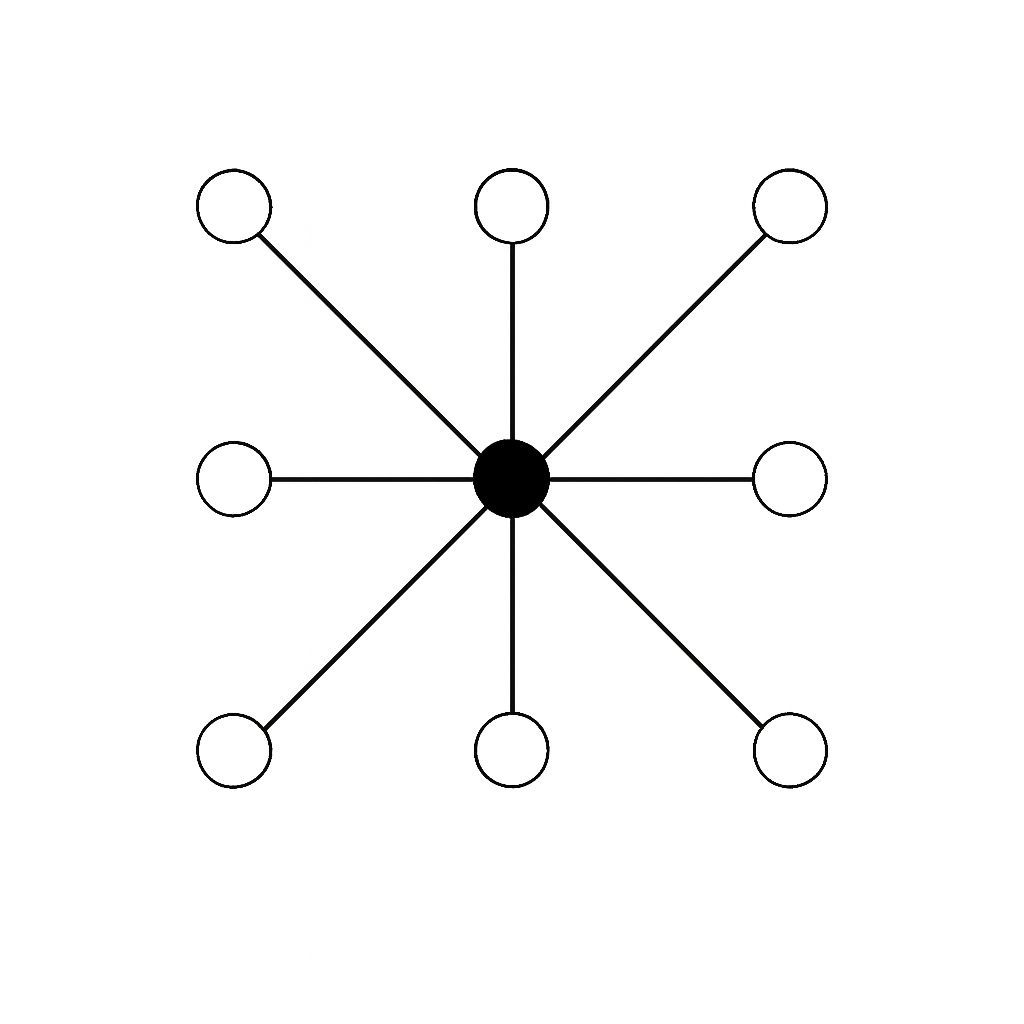} \\
		{\small Nearest neighbor} & {\small Moore neighbor }
		\end{tabular}
		\caption{ \label{fig:neighbor} \small Two neighbourhood systems in $\Z^2$. Left: nearest neighbor connectivity as in \eqref{eq:nearest}, where each point has 4 neighbors. Right: Moore neighbor connectivity as in \eqref{eq:Moore}, where each point has 8 neighbors (including diagonals).}
	\end{center}
\end{figure}

We generalize the one-dimensional approach by defining a canonical root for each cluster. In $\Z^d$, each cluster above $u$ is a finite connected set of indices (connected under either the nearest or the Moore neighborhood). As in one dimension, stationarity suggests choosing the ``lower left'' point of the cluster as its root.

To make this precise, we introduce the \emph{lexicographic order} on $\Z^d$. For two distinct points $t=(t_1, t_2, \ldots, t_d)$ and $t'=(t_1', t_2', \ldots, t_d')$ in $\Z^d$, we write
\begin{equation*}
\text{$t<_{\rm lex}t'$ if and only if there exists $j$ such that $t_j<t_j'$ and $t_i=t_i'$ for all $i<j$}.
\end{equation*}
Thus the first coordinate where the two vectors differ determines the ordering. For example, in $\Z^2$, 
\[
(0,0)<_{\rm lex}(0,1)<_{\rm lex}(1,-1)<_{\rm lex}(1,0)<_{\rm lex}(1,1).
\]
This is a total order, so every finite nonempty subset of $\Z^d$ has a unique smallest element. We define the \emph{root} of a cluster to be this lexicographically smallest element. By stationarity, we may translate a cluster so that its root is at the origin $o$; in particular, $o<_{\rm lex} t$ for all nonzero $t$ in the cluster.

Figure \ref{fig:simul-2D} provides examples of clusters and empirical cluster size histograms for an isotropic Gaussian field on $\Z^2$ under both nearest and Moore neighbors. Since Moore neighbors include also diagonals as connected sites, larger clusters are observed more frequently than under nearest-neighbor connectivity. The same phenomenon arises when moving from one to two dimensions (see Figures \ref{fig:simul-1D} and \ref{fig:simul-2D}), as the increased number of connections similarly promotes the formation of larger connected components.

\begin{figure}[t]
	\begin{center}
		\begin{tabular}{cc}
			\includegraphics[trim=20 20 0 0,clip,width=1.5in]{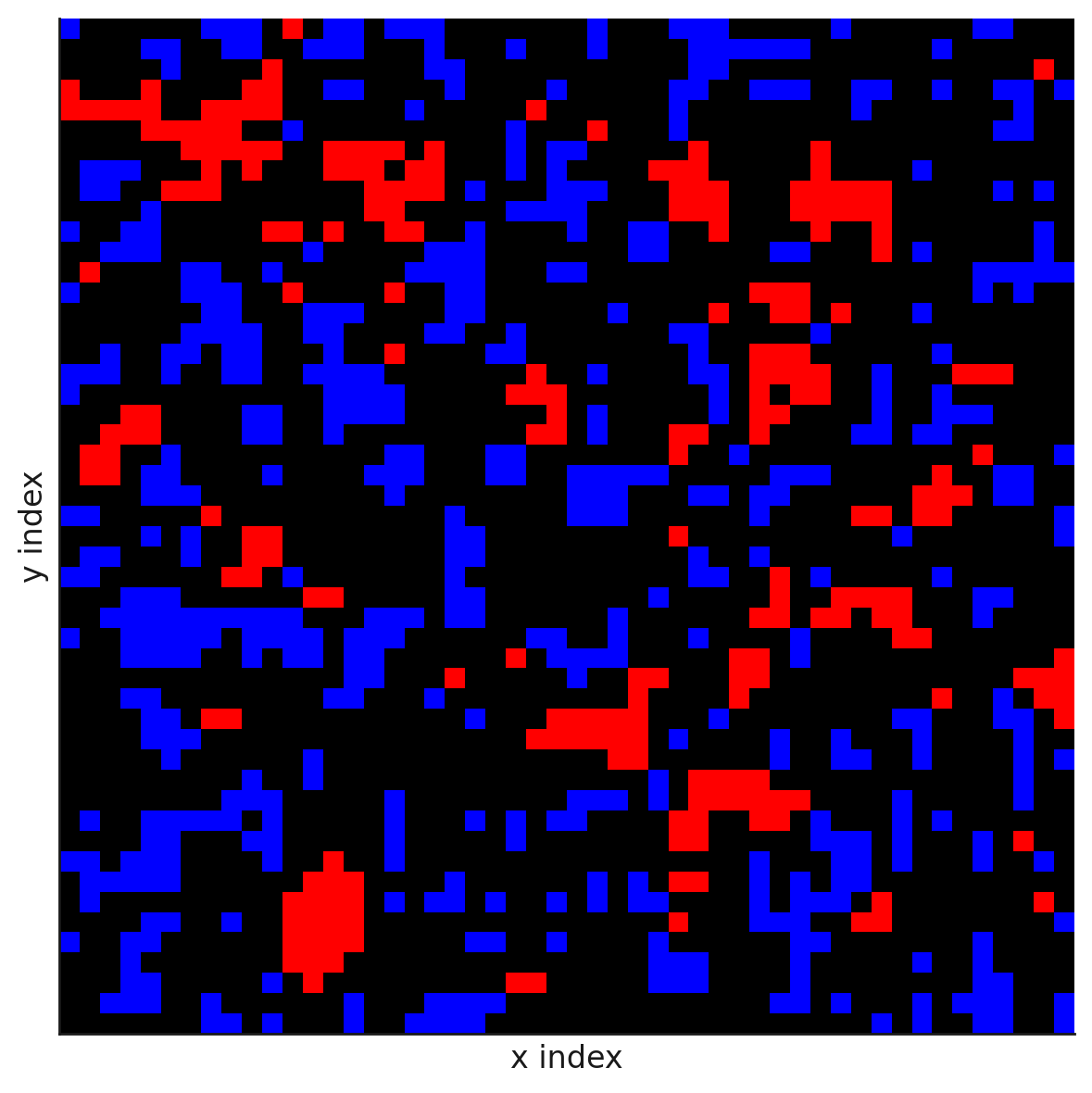} &
			\includegraphics[trim=0 0 0 0,clip,width=3in]{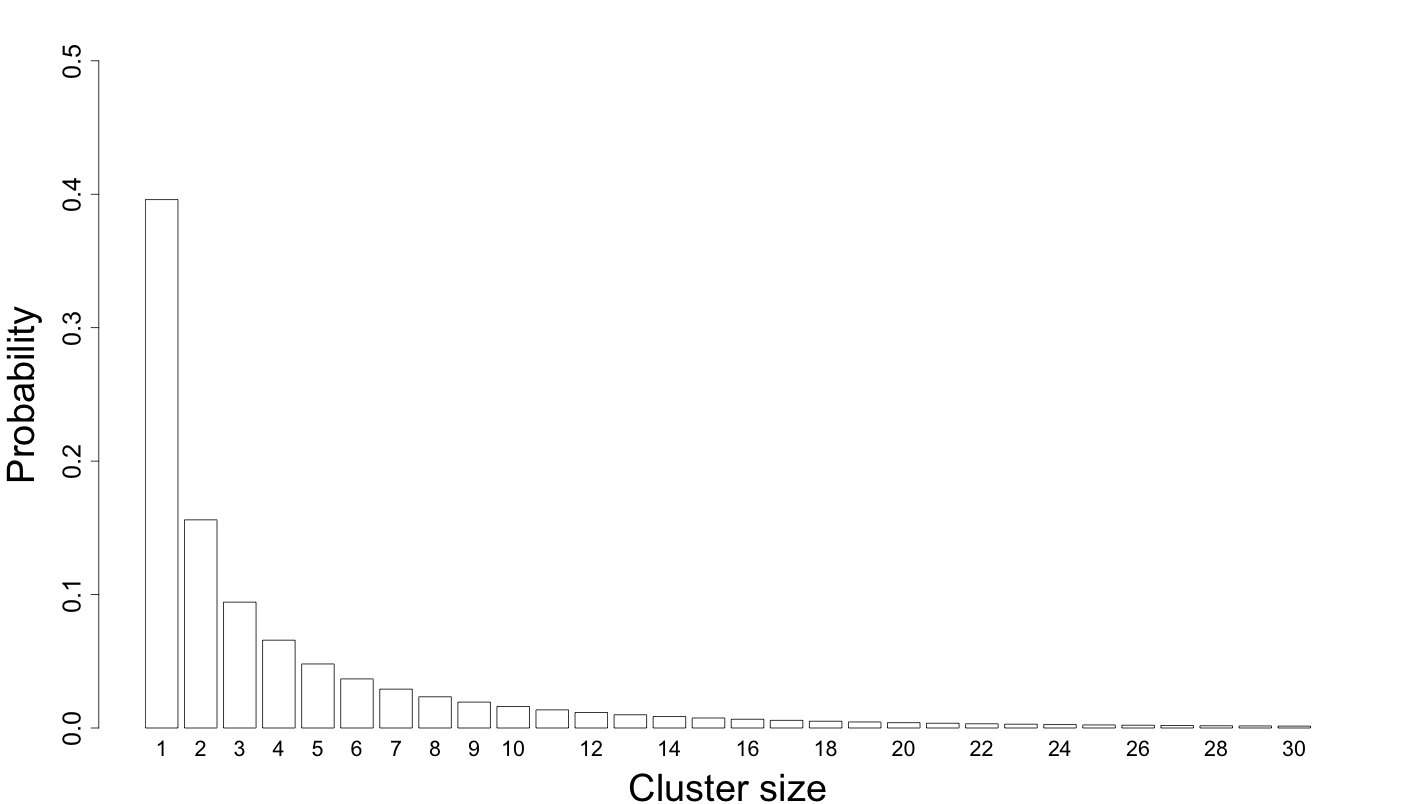} \\
			\small (a) Clusters (nearest neighbor) &
			\small (b) Empirical cluster size distribution (nearest neighbor)
		\end{tabular}
		
		\vspace{3.9mm}  
		
		\begin{tabular}{cc}
			\includegraphics[trim=20 20 0 0,clip,width=1.5in]{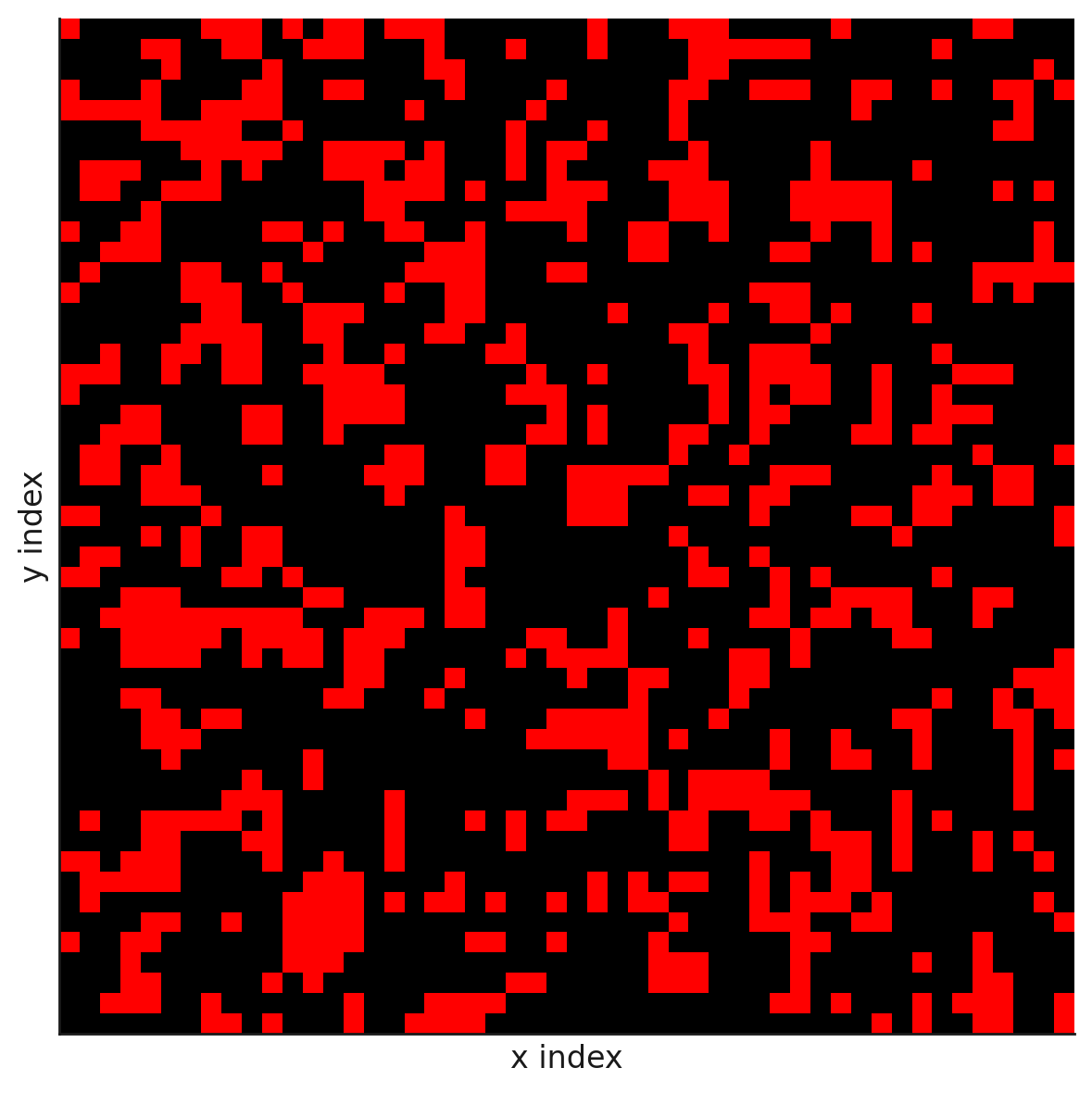} &
			\includegraphics[trim=0 0 0 0,clip,width=3in]{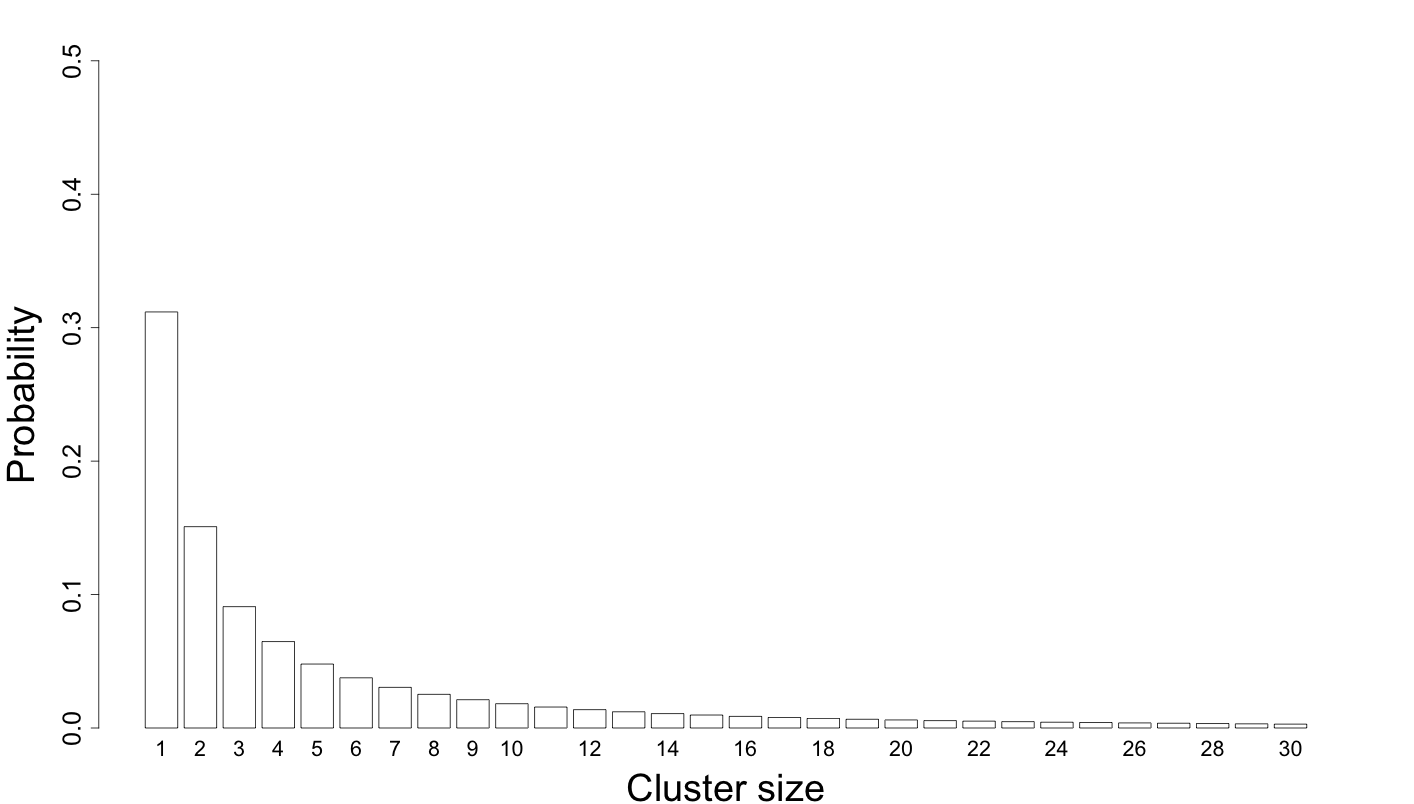} \\
			\small (c) Clusters (Moore neighbor) &
			\small (d) Empirical cluster size distribution (Moore neighbor)
		\end{tabular}
		\caption{ \label{fig:simul-2D} \small Simulation of clusters and empirical cluster size distributions for an isotropic Gaussian field on $\Z^2$ with covariance $C(t,s)=e^{-\|t-s\|^2}$ and threshold $u=0.5$. Panels (a) and (b) use the nearest neighbors, while panels (c) and (d) use Moore neighbors. In (a), distinct clusters above $u$ are highlighted in blue and red (note that diagonal sites are not connected under nearest neighbors); while in (c), they are marked in red. Panels (b) and (d) display the corresponding empirical cluster size distributions from repeated simulations.}
	\end{center}
\end{figure}

\subsubsection{Cluster size distribution in $\Z^d$}
For $k=1,2,\ldots$, denote the collection of all rooted clusters of size $k$ by
\begin{equation*}
\mathcal{C}_k^{\rm root} = \{ \text{size-$k$ clusters above $u$ with the root at $o$}\}.
\end{equation*}
For example, under nearest neighbors in $\Z^2$,
\begin{equation}\label{eq:root3}
	\begin{split}
		\mathcal{C}_1^{\rm root} &= \{o\}, \quad \mathcal{C}_2^{\rm root} = \{ \{o,e_1\},  \{o,e_2\}\},\\
		\mathcal{C}_3^{\rm root}&=\{\{o,e_1,2e_1\}, \{o,e_1,(1,1)\}, \{o,e_1,(1,-1)\}, \{o,e_1,e_2\}, \{o,e_2,2e_2\}, \{o,e_2,(1,1)\}\}.
	\end{split}
\end{equation}
If one uses Moore neighbors, then $\mathcal{C}_2^{\rm root}$ contains two additional sets $\{o,(1, 1)\}$ and $\{o,(1, -1)\}$, and $\mathcal{C}_3^{\rm root}$ includes further shapes such as $\{o, (1,1), (2,2)\}$, $\{o, e_1, (2,1)\}$, and others.

For a finite set $D\subset \Z^d$, we write $X_D>u$ if $X_t>u$ for all $t\in D$, and $X_D\le u$ if $X_t\le u$ for all $t\in D$. Let $\mathcal{N}(D)$ denote the \textit{exterior neighbor} set of $D$:
\begin{equation}\label{eq:neighbor}
\mathcal{N}(D) = \begin{cases}
	\{t'\in \Z^d\setminus D: \|t'-t\|=1 \text{ for some } t\in D\}, &\text{nearest neighbors,}\\
	\{t'\in \Z^d\setminus D: \|t'-t\|_\infty=1 \text{ for some } t\in D\}, &\text{Moore neighbors}.
\end{cases}
\end{equation}

We can now state the exact cluster size distribution in $\Z^d$.
\begin{theorem}\label{thm:CSD_d}
	Let $\{X_t: t\in \Z^d\}$ be a centered stationary random field and let $u\in \R$ be a fixed threshold. Then, for $ k=1,2,\ldots$,
	\begin{equation}\label{eq:CSD_d}
		\begin{split}
			\P(S_u=k) = \frac{w_k}{\sum_{j=1}^\infty w_j}, 
		\end{split}
	\end{equation}
where $w_k$ is the probability that a cluster with root at the origin has size $k$, i.e.,
\begin{equation}\label{eq:w_d}
	\begin{split}
		w_k = \sum_{D\in \mathcal{C}_k^{\rm root}} \P(X_D>u, X_{\mathcal{N}(D)}\le u).
	\end{split}
\end{equation}
\end{theorem}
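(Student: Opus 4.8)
The plan is to follow the one-dimensional argument of Theorem~\ref{thm:CSD_1D} verbatim in spirit, using the lexicographic root as a canonical representative and then decomposing the event ``a cluster with root at $o$ exists'' according to the shape of that cluster. First I would invoke stationarity exactly as in the passage following \eqref{eq:CSD_def}: the law of the field is translation invariant and every finite cluster of $\{t\in\Z^d:X_t>u\}$ has a unique lexicographically smallest site, so the size distribution $\P(S_u=k)$ coincides with the conditional probability $\P(|C_u|=k \mid \text{a cluster with root at } o \text{ exists})$. The event being conditioned on has positive probability, so this conditional probability is well defined, and it remains to identify its numerator $\P(|C_u|=k,\ \text{root at }o)$ with $w_k$ and its denominator with $\sum_{j\ge1}w_j$.

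The combinatorial heart is the following description, which I would verify using the connectivity conventions \eqref{eq:nearest}--\eqref{eq:Moore} and the exterior-neighbor set \eqref{eq:neighbor}: for a fixed $D\in\mathcal{C}_k^{\rm root}$, the event that $D$ is exactly the cluster containing $o$ equals $\{X_D>u\}\cap\{X_{\mathcal{N}(D)}\le u\}$. Indeed, if $X_t>u$ for all $t\in D$ and $X_t\le u$ for all $t\in\mathcal{N}(D)$, then since $D$ is connected and separated from $\Z^d\setminus D$ by the sub-threshold layer $\mathcal{N}(D)$, the connected component of $\{X>u\}$ through $o$ is precisely $D$; conversely, if the cluster through $o$ is $D$, then $X_D>u$, and by maximality of a connected component no exterior neighbor of $D$ can exceed $u$, i.e.\ $X_{\mathcal{N}(D)}\le u$. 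Next I would observe that these events are pairwise disjoint over all $D$ ranging in $\bigcup_{k\ge1}\mathcal{C}_k^{\rm root}$: distinct connected components containing $o$ cannot coexist, so at most one such event can occur. Since $\mathcal{C}_k^{\rm root}$ is by definition the set of \emph{all} connected, root-at-$o$ shapes of size $k$, the union of these events over all sizes and shapes is exactly $\{\text{a cluster with root at }o\text{ exists}\}$. Hence $w_k=\P(|C_u|=k,\ \text{root at }o)=\sum_{D\in\mathcal{C}_k^{\rm root}}\P(X_D>u,\ X_{\mathcal{N}(D)}\le u)$, which is \eqref{eq:w_d}, and $\sum_{j\ge1}w_j=\P(\text{a cluster with root at }o\text{ exists})$. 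Dividing yields \eqref{eq:CSD_d}.

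For completeness I would also note that the same formula arises from the per-lattice-density viewpoint used in \eqref{eq:empirical}: $w_k$ is the expected number of size-$k$ rooted clusters per site, so in a window of $N$ sites the empirical fraction of size-$k$ clusters converges almost surely to $w_k/\sum_j w_j$ by the ergodic theorem under stationarity and ergodicity, consistent with \eqref{eq:CSD_d}.

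The main obstacle I anticipate is purely the careful bookkeeping in the second step: one must check that conditioning on the root being at $o$, together with the ``interior above $u$ / exterior-neighbor below $u$'' description, genuinely partitions $\{\text{a cluster with root at }o\text{ exists}\}$, with no shape double-counted or omitted. This was transparent in $\Z$ (a cluster is an interval $\{0,\dots,k-1\}$ with two-point boundary $\{-1,k\}$), but in $\Z^d$ it requires keeping the connectivity type, the definition of $\mathcal{N}(D)$, and the lexicographic order straight; the family $\mathcal{C}_k^{\rm root}$ is well defined but grows rapidly in $k$ and $d$, so the enumeration is only implicit. A secondary technical point is that $\sum_j w_j<\infty$, equivalently that the cluster through $o$ is finite almost surely, which holds under mild conditions on the field and is what makes the conditional statement meaningful.
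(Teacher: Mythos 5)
Your proposal follows the paper's proof essentially verbatim: reduce by stationarity to clusters rooted at the origin, identify the event that the cluster equals a given shape $D\in\mathcal{C}_k^{\rm root}$ with $\{X_D>u,\ X_{\mathcal{N}(D)}\le u\}$, and sum over shapes to get the numerator $w_k$ and denominator $\sum_{j\ge1}w_j$. The only difference is that you spell out the two-way verification of that identification and the disjointness of the shape events (which the paper asserts implicitly), and you correctly flag the almost-sure finiteness of the cluster through $o$ as the tacit assumption making $\sum_j w_j$ the full conditioning probability.
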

\begin{proof}
	By stationarity we may assume that the cluster under consideration, denoted $C_u$, has root at the origin $o$. Equivalently, we condition on the event that $o$ is the lexicographically smallest point in its cluster. Then
	\begin{equation*}
		\begin{split}
			\P(S_u=k)
			&= \P\bigl(|C_u|=k \,\big|\, \text{there exists a cluster $C_u$ above $u$ with root at $o$}\bigr)\\
			&= \frac{\P(\text{$C_u$ with root at $o$ has size $k$})}{\P(\text{there exists a cluster above $u$ with root at $o$})}.
		\end{split}
	\end{equation*}
	The numerator is obtained by summing over all rooted clusters of size $k$:
	\[
	\P(\text{$C_u$ with root at $o$ has size $k$})
	= \sum_{D\in \mathcal{C}_k^{\rm root}} \P(X_D>u, X_{\mathcal{N}(D)}\le u) = w_k;
	\]
	and similarly, the denominator becomes
	\[
	\sum_{j=1}^\infty \sum_{D\in \mathcal{C}_j^{\rm root}} \P(X_D>u, X_{\mathcal{N}(D)}\le u)
	= \sum_{j=1}^\infty w_j.
	\]
	Substituting these into the conditional probability formula yields \eqref{eq:CSD_d}.
\end{proof}

\begin{example}\label{example:WNF}
	Let $X_t$, $t\in \Z^2$, be a two-dimensional white noise field with common CDF $F$. Let $p=F(u)$ and $q=1-p$. Inspecting the sets in $\mathcal{C}_k^{\rm root}$ (e.g., \eqref{eq:root3}) and the corresponding boundary sizes $|\mathcal{N}(D)|$ yields that, under nearest neighbors,
	\[
	w_1 = p^4q, \quad w_2 = 2p^6q^2, \quad w_3 = (2p^8+4p^7)q^3.
	\]
	Similarly, under Moore neighbors,
	\[
	w_1 = p^8q, \quad w_2 = (2p^{10}+2p^{12})q^2, \quad w_3 = (6p^{12}+8p^{14}+4p^{15}+2p^{16})q^3.
	\]
	In general, the combinatorial complexity of $\mathcal{C}_k^{\rm root}$ grows rapidly with $k$, so closed-form expressions for $w_k$ are rarely available. Nevertheless, $w_k$ can be evaluated numerically by enumerating shapes or approximated via Monte-Carlo simulation.
\end{example}

\begin{remark}{\bf [Interpretation of $w_k$ and empirical relations.]} Each $w_k$ is the per-lattice density of size-$k$ clusters. If the field is observed on a finite domain of $N^d$ lattice points, then the expected numbers of size-$k$ clusters and of all clusters above $u$ are $N^d w_k$ and $N^d \sum_{j\ge1} w_j$, respectively. Consequently, the empirical proportion of size-$k$ clusters among all observed clusters converges to $w_k/\sum_{j\ge1}w_j$ under stationarity and ergodicity, in direct analogy with \eqref{eq:empirical} in the one-dimensional case. Moreover,  
	\begin{equation}\label{eq:w-empirical}
		\begin{split}
			\widehat{w}_k=\frac{\#\{\text{size-$k$ clusters above $u$} \}}{N^d} \quad {\rm and} \quad \sum_{j=1}^\infty \widehat{w}_j = \frac{\#\{\text{all clusters above $u$} \}}{N^d},
		\end{split}
	\end{equation}
are natural empirical estimators of $w_k$ and $\sum_{j\ge1}w_j$, respectively.
\end{remark}

\begin{remark}{\bf [Computation of $w_k$.]}
	For large $k$, the probabilities $w_k$ in \eqref{eq:w_d} are difficult to compute analytically, and the denominator $\sum_{j\ge1}w_j$ in \eqref{eq:CSD_d} is rarely available in closed form. Section \ref{sec:MC} below describes Monte–Carlo estimators for $w_k$. 
	
	
	If the random field is isotropic (invariant under rotations), then isotropy can substantially reduce the computational burden. Indeed, the probability $\P(X_D>u, X_{\mathcal{N}(D)}\le u)$ depends only on the shape of $D$ up to rigid motions; all rotated or reflected copies of a given shape contribute the same probability and differ only by their multiplicity. For example, under nearest neighbors in $\Z^2$, the set $\mathcal{C}_2^{\rm root}$ consists of two translates of the same shape, while $\mathcal{C}_3^{\rm root}$ in \eqref{eq:root3} reduces to two geometric types: straight-line shapes and $L$-shapes.
\end{remark}

\begin{remark}{\bf [Boundary representation of $\sum_{k\ge1}w_k$.]}
	Motivated by the one-dimensional formula, we present an alternative representation of $\sum_{k\ge1}w_k$. Let
	\begin{equation*}
		\begin{split}
	B =\{t\in \mathcal{N}(o): t<_{\rm lex} o\} = \{-e_i: i=1,\ldots, d\}, \quad
	H = \{t\in \Z^d \setminus \mathcal{N}(o): t<_{\rm lex} o\}.
\end{split}
\end{equation*}
	Because the root $o$ is the lexicographically smallest point in its cluster, we can write 
	\begin{equation}\label{eq:sum-w}
		\begin{split}
			\sum_{k=1}^\infty w_k&= \P(\text{there exists a cluster above $u$ with root at $o$})\\
			&= \P(X_o>u, X_B\le u, \text{and there is no exceedance path from $o$ to $H$})\\
			&= \P(X_o>u, X_B\le u)\\
			&\quad - \P(X_o>u, X_B\le u, \text{and $\exists$ exceedance path from $o$ to some $t\in H$})
		\end{split}
	\end{equation}
	When $d=1$, the last probability in \eqref{eq:sum-w} vanishes (there is no exceedance path from $o$ to $-2$), so $\sum_{k=1}^\infty w_k=\P(X_0>u, X_{-1}\le u)$. For $d\ge 2$, this correction term involves all possible long-range exceedance paths and is generally difficult to evaluate explicitly.
\end{remark}

\subsubsection{Clusters containing the origin versus rooted at the origin}
For $k=1,2,\ldots$, define
\begin{equation*}
	\mathcal{C}_k^{\rm inside} = \{ \text{all size-$k$ clusters above $u$ that contain the origin $o$}\}.
\end{equation*} 
That is, the origin lies somewhere in the cluster, but is not necessarily the root. For each $D\in\mathcal{C}_k^{\rm root}$ there are $k$ placements of the origin in $D$, and therefore
\begin{equation}\label{eq:C-inside}
	|\mathcal{C}_k^{\rm inside}| = k|\mathcal{C}_k^{\rm root}|.
\end{equation}
Let $w_k^{\rm inside}$ denote the probability that a size-$k$ cluster above $u$ contains the origin:
\begin{equation}\label{eq:w-inside}
	\begin{split}
		w_k^{\rm inside} = \sum_{D\in \mathcal{C}_k^{\rm inside}} \P(X_D>u, X_{\mathcal{N}(D)}\le u).
	\end{split}
\end{equation}
By stationarity and \eqref{eq:C-inside}, we have
\begin{equation}\label{eq:w-inside-w}
	w_k^{\rm inside} = kw_k.
\end{equation}
Combining \eqref{eq:w-inside-w} with Theorem \ref{thm:CSD_d}, we obtain immediately the following formula for the cluster size distribution based on $w_k^{\rm inside}$ .
\begin{corollary}\label{cor:CSD_d}
	Under the assumptions of Theorem~\ref{thm:CSD_d},
	\begin{equation*}
		\begin{split}
			\P(S_u=k) = \frac{w_k^{\rm inside}/k}{\sum_{j=1}^\infty (w_j^{\rm inside}/j)}, \quad k=1,2,\ldots,
		\end{split}
	\end{equation*}
	where $w_k^{\rm inside}$ is defined in \eqref{eq:w-inside}.
\end{corollary}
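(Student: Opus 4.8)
The plan is short, since the corollary is just a re-expression of Theorem~\ref{thm:CSD_d} through the identity \eqref{eq:w-inside-w}, and essentially all the work lies in that identity, which I would record cleanly first. Fix $k\ge1$. I would introduce the map $\Phi(D',t)=D'-t$, defined on pairs $(D',t)$ with $D'\in\mathcal{C}_k^{\rm root}$ and $t\in D'$, and send it to finite connected sets of size $k$. Because the lexicographic order is translation invariant, the lex-smallest point of $D'-t$ is $-t$, and $o=t-t\in D'-t$; hence $\Phi(D',t)\in\mathcal{C}_k^{\rm inside}$. Conversely, any $D\in\mathcal{C}_k^{\rm inside}$ has a well-defined lex-root $r(D)$, with $D-r(D)\in\mathcal{C}_k^{\rm root}$ and $-r(D)\in D-r(D)$ (since $o\in D$), so $D=\Phi\bigl(D-r(D),-r(D)\bigr)$; and $\Phi(D_1',t_1)=\Phi(D_2',t_2)$ forces $-t_1=-t_2$ by comparing lex-roots, hence $t_1=t_2$ and then $D_1'=D_2'$. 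Thus $\Phi$ is a bijection onto $\mathcal{C}_k^{\rm inside}$, which is exactly the combinatorial statement \eqref{eq:C-inside}, $|\mathcal{C}_k^{\rm inside}|=k|\mathcal{C}_k^{\rm root}|$.

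Next I would bring in stationarity. Since $\mathcal{N}(D'-t)=\mathcal{N}(D')-t$ for every translate, stationarity of $\{X_t\}$ gives
\[
\P\bigl(X_{D'-t}>u,\ X_{\mathcal{N}(D'-t)}\le u\bigr)=\P\bigl(X_{D'}>u,\ X_{\mathcal{N}(D')}\le u\bigr).
\]
Summing the definition \eqref{eq:w-inside} of $w_k^{\rm inside}$ over $\mathcal{C}_k^{\rm inside}$ and reindexing by the bijection $\Phi$, each rooted shape $D'\in\mathcal{C}_k^{\rm root}$ contributes exactly $k$ identical terms, one for each $t\in D'$, so that
\[
w_k^{\rm inside}=\sum_{D'\in\mathcal{C}_k^{\rm root}}\ \sum_{t\in D'}\P\bigl(X_{D'}>u,\ X_{\mathcal{N}(D')}\le u\bigr)=k\sum_{D'\in\mathcal{C}_k^{\rm root}}\P\bigl(X_{D'}>u,\ X_{\mathcal{N}(D')}\le u\bigr)=kw_k,
\]
which is \eqref{eq:w-inside-w}. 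Finally I would substitute $w_j=w_j^{\rm inside}/j$ into the formula $\P(S_u=k)=w_k/\sum_{j\ge1}w_j$ of Theorem~\ref{thm:CSD_d}, simultaneously in the numerator and the denominator, to get $\P(S_u=k)=(w_k^{\rm inside}/k)\big/\sum_{j\ge1}(w_j^{\rm inside}/j)$, as claimed; the positive normalizing constant $\sum_{j\ge1}w_j=\sum_{j\ge1}(w_j^{\rm inside}/j)$ is left unchanged, so no convergence issue arises beyond the one already implicit in Theorem~\ref{thm:CSD_d}.

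I do not expect a serious obstacle here: the corollary is a bookkeeping consequence of results already proved. The one point deserving a careful sentence is the bijectivity of $\Phi$ in the first paragraph, namely that the $k$ translates $D'-t$, $t\in D'$, of a given rooted shape are genuinely distinct members of $\mathcal{C}_k^{\rm inside}$ and that no two distinct rooted shapes can share such a translate; both are immediate once one observes that the lex-root of a set both determines and is determined by the translation carrying its root to the origin, but it is worth stating explicitly so that the factor $k$ in \eqref{eq:w-inside-w} is transparent.
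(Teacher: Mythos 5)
Your proposal is correct and follows essentially the same route as the paper: establish $|\mathcal{C}_k^{\rm inside}|=k|\mathcal{C}_k^{\rm root}|$ via the translation correspondence, use stationarity to conclude $w_k^{\rm inside}=kw_k$, and substitute into Theorem~\ref{thm:CSD_d}. The only difference is that you spell out the bijection and the translation-equivariance of $\mathcal{N}(\cdot)$ explicitly, which the paper leaves implicit.
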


\subsubsection{Monte–Carlo estimation of $w_k$ for large $k$}\label{sec:MC}

For large cluster sizes the exact probabilities $w_k$ are both analytically intractable and extremely small, which makes direct estimation difficult. In contrast, the quantities $w_k^{\rm inside}=k w_k$ are larger and thus easier to estimate. Heuristically, by \eqref{eq:w-inside},
\[
w_k^{\rm inside} \approx \text{average number of size-$k$ clusters above $u$ that contain the origin $o$}.
\]

This observation leads to a simple Monte-Carlo estimator. Simulate $M$ independent realizations of the random field on a finite window containing the origin. For the $m$-th realization, let $I_m$ be the indicator that there exists a size-$k$ cluster containing the origin and not touching the boundary of the simulation window (to avoid edge effects). Then
\[
\widetilde{w}_k^{\rm inside} = \frac{1}{M} \sum_{m=1}^M I_m
\]
is an unbiased estimator of $w_k^{\rm inside}$. The corresponding \textit{Monte–Carlo estimator} of $w_k$ is
\begin{equation}\label{eq:MC1}
\widetilde{w}_k = \frac{\widetilde{w}_k^{\rm inside}}{k} = \frac{1}{Mk} \sum_{m=1}^M I_m.
\end{equation}
\smallskip

\noindent
\textbf{A refined estimator.} Building on this idea, we now introduce a refined Monte-Carlo procedure that extracts more information from each simulated realization. Simulate $M$ independent realizations of the field on a finite domain of size $N^d$ containing the origin (for example, $[-50,50]^2$ when $d=2$). Within this domain, select a smaller subdomain of size $n^d$ that also contains the origin (e.g., $[-25,25]^2$ when $d=2$).
	
For a given realization, consider all size-$k$ clusters that intersect the $n^d$ subdomain but do not touch the boundary of the full $N^d$ domain. If a size-$k$ cluster lies entirely inside the subdomain, it is counted as one. If a size-$k$ cluster intersects the subdomain in exactly $x$ lattice points, it is counted with weight $x/k$. We refer to the quantity $x/k$ as the \emph{intersection proportion}. This definition naturally includes the fully contained case, for which the intersection proportion equals one.
	
Let $J_m$ denote the total weighted count of size-$k$ clusters in the $m$-th realization, where each cluster is weighted by its intersection proportion as described above. The \textit{refined Monte-Carlo estimator} of $w_k$ is then	\begin{equation}\label{eq:MC2}
		\widetilde{w}_k = \frac{1}{n^d M} \sum_{m=1}^M J_m.
\end{equation}

The estimator in \eqref{eq:MC1} based on $w_k^{\rm inside}$ corresponds to the special case in which the $n^d$ subdomain is reduced to the singleton $\{o\}$, so that $n=1$. In this setting, a size-$k$ cluster is observed if and only if it contains the origin, and it is counted with weight $1/k$, which coincides with the intersection proportion. Consequently, $J_m = I_m/k$, and the estimator in \eqref{eq:MC2} reduces to that in \eqref{eq:MC1}.

The refined estimator therefore generalizes the origin-based approach by incorporating information from multiple lattice sites within a subdomain of size $n^d$. As a result, each simulated realization contributes more effectively to the estimation of $w_k$, leading to improved statistical efficiency, particularly when $k$ is large and $w_k$ is small.

\section{The peak-based cluster size distribution}
When the random field $X_t$ is nonstationary, the cluster size distribution in \eqref{eq:CSD_def} becomes challenging to analyze. This difficulty arises because, in the nonstationary setting, the conditional probability in \eqref{eq:CSD_def} depends on the spatial location of the observed cluster $C_u$, as the covariance structure of the field varies across the domain. Consequently, a single global cluster size distribution is generally not well defined. A more appropriate formulation is a local, spatially varying cluster size distribution that reflects the heterogeneity in the covariance structure of the field. 

A second motivation comes from the fact that every cluster contains at least one local maximum (typically only a few), which can serve as a natural representative of the cluster. Moreover, for high thresholds $u$, clusters tend to be dominated by a single prominent peak. Since peak analysis is itself a mature and active area \cite{Cheng:2020,CS15,CS18,CS17,Fondeville:2018,Huser:2014}, it is natural and valuable to study the peak-based cluster size distribution from both theoretical and applied perspectives. Specifically, given that a local maximum exceeding $u$ is observed at $t$, we consider the distribution of the size of the cluster containing that peak. 

Formally, let $C_u(t)$ denote the cluster above $u$ that contains $t$, and let
$S_u^{\rm peak}(t)$ be its size, conditional on $t$ being a local maximum above $u$. The \textit{peak-based cluster size distribution} is defined by
\begin{equation}\label{eq:CSD_peak}
	\begin{split}
		\P(S_u^{\rm peak}(t)=k) = \P( |C_u(t)|=k \ | \ t \ \text{\rm is a local maximum and } X_t>u), \quad k=1,2,\ldots
	\end{split}
\end{equation}

In signal detection problems, one is often interested in peaks that are not only high in amplitude but also substantial in spatial extent. This is particularly relevant in neuroimaging applications, where extended activation patterns are more scientifically interpretable than isolated spikes \cite{Chumbley:2010,Poline:1997,Worsley:1996a,Zhang:2009}. A peak-based formulation of the cluster size distribution provides a natural and efficient framework for quantifying such effects and assessing their significance, through $p$-values, when detecting spatially extended signals.

As in the study of exact cluster size distributions, we first present the one-dimensional case, where the structure of clusters and the corresponding computations admit explicit characterization, and then extend the results to $\Z^d$.

\subsection{Discrete processes on $\Z$}
Let $\{X_t: t\in \Z\}$ be a discrete process on $\Z$, and fix a threshold $u\in \R$. The excursion set $\{t\in\Z: X_t>u\}$ decomposes into clusters (connected components). Conditional on a local maximum $t$ with $X_t>u$, the associated cluster $C_u(t)$ consists of all connected exceedance sites connected to $t$. 

For $k=1$, the cluster $C_u(t)$ is the singleton $\{t\}$, which occurs precisely when both neighbors of $t$ fall below $u$. Hence
\begin{equation}\label{eq:peak1}
		\P(S_u^{\rm peak}(t)=1)= \P(X_{t-1}\le u, X_{t+1}\le u \ | \ X_t>u,  X_t>\max\{X_{t-1}, X_{t+1}\}).
\end{equation}
For $k=2$, the cluster $C_u(t)$ must be either $\{t, t+1\}$ or $\{t-1, t\}$, so
\begin{equation}\label{eq:peak2}
	\begin{split}
	\P(S_u^{\rm peak}(t)=2)&= \P(X_{t-1}\le u, X_{t+1}> u, X_{t+2}\le u  \ | \ X_t>u,  X_t>\max\{X_{t-1}, X_{t+1}\})\\
		& + \P(X_{t-2}\le u, X_{t-1}> u, X_{t+1}\le u \ | \ X_t>u,  X_t>\max\{X_{t-1}, X_{t+1}\}).
	\end{split}
\end{equation}
If $X_t$ is stationary, then by symmetry the two probabilities on the right side of \eqref{eq:peak2} coincide.

More generally, for a size-$k$ cluster containing $t$, the peak site $t$ partitions the cluster into a left part and a right part. Since $t$ itself exceeds $u$, the number of exceedance sites to the right of $t$, denoted $|C_u^{\rm right}(t)|$, can range from $0$ to $k-1$; the number to the left is then $|C_u^{\rm left}(t)|=k-1-|C_u^{\rm right}(t)|$. By \eqref{eq:CSD_peak}, for $k\ge1$,
\begin{equation*}
	\begin{split}
		 \P(S_u^{\rm peak}(t)=k)= \sum_{j=0}^{k-1}\P( |C_u^{\rm right}(t)|=j, |C_u^{\rm left}(t)|=k-1-j \ | \ t \ \text{\rm is local max and } X_t>u).
	\end{split}
\end{equation*}
The event $\{|C_u^{\rm right}(t)|=j,\ |C_u^{\rm left}(t)|=k-1-j\}$ is equivalent to
\[
X_{t-k+j}\le u,\ X_{t-k+j+1}>u,\ \ldots,\ X_{t+j}>u,\ X_{t+j+1}\le u.
\]
Therefore,
\begin{equation}\label{eq:peak-GP}
	\begin{split}
		&\quad \P(S_u^{\rm peak}(t)=k)\\
		&= \frac{\sum_{j=0}^{k-1}\P(X_{t-k+j}\le u, X_{t-k+j+1}>u, \ldots, X_{t+j}>u, X_{t+j+1}\le u,  X_t>\max\{X_{t-1}, X_{t+1}\})}{\P(X_t>u,  X_t>\max\{X_{t-1}, X_{t+1}\})}.
	\end{split}
\end{equation}
In the stationary case, the contributions corresponding to $j$ and $k-1-j$ are equal by symmetry.

\begin{example}\label{example:peak-GWN}
	Let $X_t$, $t\in \Z$, be a white noise process with common continuous CDF $F$. Let $p=F(u)$ and $q=1-p$. Note that, if $Y_1, Y_2$ and $Y_3$ are i.i.d. with CDF $F$ and density $f$, then 
	\[
	\P(Y_1>Y_2, Y_1>Y_3 \, | \, Y_1=x) = \P(Y_2<x)^2 = F(x)^2, 
	\]
	so
	\[
	\P(Y_1>u, Y_1>Y_2, Y_1>Y_3) = \int_u^\infty f(x)F(x)^2dx = \frac{1-F(u)^3}{3} = \frac{1-p^3}{3}.
	\]
	Similarly,
	\begin{equation*}
		\begin{split}
			\P(Y_1>Y_2>u) &= \int_u^\infty f(x)(1-F(x))dx = \frac{(1-F(u))^2}{2} = \frac{q^2}{2},\\
			\P(Y_1>Y_2>u, Y_1>Y_3>u) &= \int_u^\infty f(x)(F(x)-F(u))^2dx = \frac{(1-F(u))^3}{3}=\frac{q^3}{3}.
		\end{split}
	\end{equation*}
	It follows from \eqref{eq:peak1} and \eqref{eq:peak2} that
	\begin{equation*}
		\begin{split}
	\P(S_u^{\rm peak}(t)=1) &= \frac{\P(X_{t-1}\le u, X_t>u, X_{t+1}\le u )}{\P(X_t>u,  X_t>\max\{X_{t-1}, X_{t+1}\})} = \frac{3p^2q}{1-p^3},\\
	\P(S_u^{\rm peak}(t)=2) &= \frac{2\P(X_{t-1}\le u, X_t>X_{t+1}>u, X_{t+2}\le u)}{\P(X_t>u,  X_t>\max\{X_{t-1}, X_{t+1}\})} = \frac{3p^2q^2}{1-p^3}.
\end{split}
\end{equation*}
Similarly, 
\begin{equation*}
	\begin{split}
		\P(S_u^{\rm peak}(t)=3) &= \frac{2\P(X_{t-1}\le u, X_t>X_{t+1}>u, X_{t+2}>u, X_{t+3}\le u))}{\P(X_t>u,  X_t>\max\{X_{t-1}, X_{t+1}\})}\\
		&\quad + \frac{\P(X_{t-2}\le u, X_t>X_{t-1}> u, X_t>X_{t+1}>u, X_{t+2}\le u)}{\P(X_t>u,  X_t>\max\{X_{t-1}, X_{t+1}\})}\\
		&= \frac{p^2q^3 + p^2q^3/3}{(1-p^3)/3} = \frac{4p^2q^3}{1-p^3}.
	\end{split}
\end{equation*}
In general, for all $k\ge1$, we have
\begin{equation}\label{eq:peak-GWN-1D}
		\P(S_u^{\rm peak}(t)=k) = \begin{cases}
			\frac{3p^2q}{1-p^3}, \quad  &k=1,\\
			\frac{(k+1)p^2q^k}{1-p^3}, \quad  &k\ge 2.
		\end{cases}
\end{equation}
This gives the exact peak-based cluster size distribution for the one-dimensional i.i.d. case.
\end{example}

\subsection{Discrete random fields on $\Z^d$}
Now we consider a random field $\{X_t: t\in \Z^d\}$ on the $d$-dimensional lattice. For $k\ge1$ and a fixed site $t$, define
\begin{equation}\label{eq:C-peak}
	\mathcal{C}_k^{\rm peak}(t) = \{\text{size-$k$ clusters above $u$ that contain $t$ as a local maximum}\}.
\end{equation}
This is a natural analogue to $\mathcal{C}_k^{\rm inside}$, with the origin replaced by $t$. For example, in $\Z^2$ under nearest neighbors, $\mathcal{C}_1^{\rm peak}(t) = \{t\}$, $\mathcal{C}_2^{\rm peak}(t)$ consists of four sets 
\[
\{t, t+e_1\}, \ \{t, t-e_1\}, \ \{t, t+e_2\}, \ \{t, t-e_2\},
\]
and $\mathcal{C}_3^{\rm peak}(t)$ consists of 18 sets in total. In general,
\[
|\mathcal{C}_k^{\rm peak}(t)|
= |\mathcal{C}_k^{\rm inside}|
= k\,|\mathcal{C}_k^{\rm root}|,
\]
since each rooted cluster of size $k$ has $k$ possible locations for $t$. Under Moore neighbors, $\mathcal{C}_k^{\rm peak}(t)$ contains additional shapes; for example, when $k=2$, it also includes four diagonal pairs that connect $t$ to diagonal neighbors.

Recall that $\mathcal{N}(D)$ in \eqref{eq:neighbor} denotes the exterior neighbor set of $D$. We now state the peak-based cluster size distribution on $\Z^d$.
\begin{theorem}\label{thm:peak-CSD}
	Let $\{X_t: t\in \Z^d\}$ be a random field on $\Z^d$ and let $u\in \R$ be a fixed threshold. Then, for $k=1,2,\ldots$,
	\begin{equation}\label{eq:peak-CSD}
		\begin{split}
			\P(S_u^{\rm peak}(t)=k) = \frac{w_k^{\rm peak}(t)}{\P\left(X_t>u, X_t>\max_{s \in \mathcal{N}(t)}X_s\right)}, 
		\end{split}
	\end{equation}
	where $w_k^{\rm peak}(t)$ denotes the probability that $t$ is a local maximum above $u$ whose associated cluster has size $k$, i.e.,
	\begin{equation}\label{eq:w-peak}
		\begin{split}
			w_k^{\rm peak}(t)= \sum_{D\in \mathcal{C}_k^{\rm peak}(t)} \P\left(X_D>u, X_{\mathcal{N}(D)}\le u, X_t>\max_{s \in \mathcal{N}(t)}X_{s}\right).
		\end{split}
	\end{equation}
\end{theorem}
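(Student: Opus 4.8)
The plan is to mimic the conditional-probability decomposition used in the proof of Theorem~\ref{thm:CSD_d}, but now organized around the fixed site $t$ rather than a lexicographic root, and carrying the local-maximum constraint through every step; no translation invariance is invoked, so the argument will apply equally in the nonstationary case. First I would rewrite the defining conditional probability \eqref{eq:CSD_peak} via Bayes' rule. By the definition of a local maximum under the chosen neighbourhood system, the conditioning event ``$t$ is a local maximum and $X_t>u$'' is exactly $\{X_t>u,\ X_t>\max_{s\in\mathcal{N}(t)}X_s\}$, whose probability is the stated denominator in \eqref{eq:peak-CSD}. Hence
\[
\P(S_u^{\rm peak}(t)=k)=\frac{\P\bigl(|C_u(t)|=k,\ X_t>u,\ X_t>\max_{s\in\mathcal{N}(t)}X_s\bigr)}{\P\bigl(X_t>u,\ X_t>\max_{s\in\mathcal{N}(t)}X_s\bigr)},
\]
and it remains to identify the numerator with $w_k^{\rm peak}(t)$.

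Next I would decompose the numerator event according to the exact shape of $C_u(t)$. For a finite connected set $D\ni t$, the event $\{C_u(t)=D\}$ — that the connected component of the excursion set $\{X>u\}$ containing $t$ is precisely $D$ — coincides with $\{X_D>u\}\cap\{X_{\mathcal{N}(D)}\le u\}$: the first factor places all of $D$ in the excursion set, and the second seals off the exterior boundary so that no further site can be attached. These events are pairwise disjoint over distinct $D$, and on $\{|C_u(t)|=k\}$ exactly one occurs, with $D$ ranging over $\mathcal{C}_k^{\rm peak}(t)$, which by construction is the collection of all size-$k$ connected subsets of $\Z^d$ containing $t$ (and hence has $|\mathcal{C}_k^{\rm peak}(t)|=|\mathcal{C}_k^{\rm inside}|=k|\mathcal{C}_k^{\rm root}|$). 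Intersecting with the local-maximum constraint and summing gives
\[
\P\bigl(|C_u(t)|=k,\ X_t>u,\ X_t>\max_{s\in\mathcal{N}(t)}X_s\bigr)=\sum_{D\in\mathcal{C}_k^{\rm peak}(t)}\P\bigl(X_D>u,\ X_{\mathcal{N}(D)}\le u,\ X_t>\max_{s\in\mathcal{N}(t)}X_s\bigr),
\]
which is exactly $w_k^{\rm peak}(t)$ in \eqref{eq:w-peak}; the constraint $X_t>u$ is redundant, being absorbed into $X_D>u$ since $t\in D$. Substituting this into the displayed ratio yields \eqref{eq:peak-CSD}.

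The one point requiring care — and essentially the only genuine obstacle — is checking that the local-maximum event is faithfully represented by the single inequality $X_t>\max_{s\in\mathcal{N}(t)}X_s$ inside each summand. Here one observes that, since $t\in D$, every neighbour of $t$ lies either in $D$ or in the exterior boundary of $D$, i.e. $\mathcal{N}(t)\subseteq D\cup\mathcal{N}(D)$. For $s\in\mathcal{N}(D)$ the event $\{X_D>u,\ X_{\mathcal{N}(D)}\le u\}$ already forces $X_s\le u<X_t$, so $X_t>X_s$ holds automatically; only the neighbours of $t$ that lie inside $D$ impose a nontrivial restriction, and the compact form $X_t>\max_{s\in\mathcal{N}(t)}X_s$ captures both cases at once without conflict. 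With this verification the proof is complete, and specializing the sum to the one-dimensional lists of cluster shapes recovers \eqref{eq:peak-GP}, and in particular \eqref{eq:peak1} and \eqref{eq:peak2}.
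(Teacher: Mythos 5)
Your proposal is correct and follows essentially the same route as the paper: express the conditional probability \eqref{eq:CSD_peak} as a ratio, identify the conditioning event with $\{X_t>u,\ X_t>\max_{s\in\mathcal{N}(t)}X_s\}$, and decompose the numerator over the disjoint shape events $\{C_u(t)=D\}=\{X_D>u,\ X_{\mathcal{N}(D)}\le u\}$ for $D\in\mathcal{C}_k^{\rm peak}(t)$, noting that $X_t>u$ is absorbed into $X_D>u$. Your closing verification that the local-maximum constraint interacts consistently with each summand (since $\mathcal{N}(t)\subseteq D\cup\mathcal{N}(D)$) is a correct refinement that the paper leaves implicit.
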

\begin{proof}
	The conditional event in \eqref{eq:CSD_peak} can be written as
	\begin{equation*}
	\P(t \ \text{\rm is a local maximum and } X_t>u) = \P\left(X_t>u, X_t>\max_{s \in \mathcal{N}(t)}X_s\right).
    \end{equation*}
Given that $t$ is a local maximum above $u$, the cluster $C_u(t)$ has size $k$ if and only if the set of exceedance sites containing $t$ coincides with one of the shapes in $\mathcal{C}_k^{\rm peak}(t)$. Thus, by \eqref{eq:CSD_peak}, for $k=1,2,\ldots$, 
\begin{equation*}
	\begin{split}
		&\quad \P(S_u^{\rm peak}(t)=k) \\
		&= \sum_{D\in \mathcal{C}_k^{\rm peak}(t)} \P(X_D >u, X_{\mathcal{N}(D)} \le u \ | \ t \ \text{\rm is a local maximum and } X_t>u)\\
		&= \frac{\sum_{D\in \mathcal{C}_k^{\rm peak}(t)} \P\left(X_D>u, X_{\mathcal{N}(D)}\le u, X_t>\max_{s \in \mathcal{N}(t)}X_{s}\right)}{\P\left(X_t>u, X_t>\max_{s \in \mathcal{N}(t)}X_s\right)},
	\end{split}
\end{equation*}
where we have used the fact that $X_D>u$ implies $X_t>u$ since $t$ belongs to all $D\in \mathcal{C}_k^{\rm peak}(t)$. Identifying the numerator with $w_k^{\rm peak}(t)$ in \eqref{eq:w-peak} yields \eqref{eq:peak-CSD}.
\end{proof}

Theorem \ref{thm:peak-CSD} holds for general random fields on $\Z^d$; no stationarity assumption is required. Moreover, the denominator in \eqref{eq:peak-CSD} involves only the joint distribution of $\{X_s: s\in \{t\}\cup\mathcal{N}(t)\}$ and is easy to compute explicitly, making it feasible to obtain closed-form expressions for $\P(S_u^{\rm peak}(t)=k)$ for moderate $k$. In contrast, for the exact cluster size distribution \eqref{eq:CSD_d}, the normalizing constant $\sum_{j\ge1}w_j$ generally involves global connectivity effects and often requires numerical approximation. 

\begin{example}\label{example:WNF-peak}
	Let $X_t$, $t\in \Z^2$, be a two-dimensional white noise field with common continuous CDF $F$. Let $p=F(u)$ and $q=1-p$. We first consider nearest neighbors. By analyzing the shapes in $\mathcal{C}_k^{\rm peak}(t)$ and applying arguments similar to Example~\ref{example:peak-GWN}, one finds that the denominator in \eqref{eq:peak-CSD} equals $(1-p^5)/5$, and
	\begin{equation*}
		\begin{split}
			w_1^{\rm peak}(t) = p^4q, \quad w_2^{\rm peak}(t) = 2p^6q^2, \quad w_3^{\rm peak}(t) = \frac{8}{3}(2p^7+p^8)q^3.
		\end{split}
	\end{equation*}
Taking the ratio yields the following probabilities:
\begin{equation*}
	\begin{split}
		\P(S_u^{\rm peak}(t)=1) &= \frac{5p^4q}{1-p^5}, \quad \P(S_u^{\rm peak}(t)=2) = \frac{10p^6q^2}{1-p^5}, \\
		 \P(S_u^{\rm peak}(t)=3) &= \frac{40(2p^7+p^8)q^3}{3(1-p^5)}.
	\end{split}
\end{equation*}

Under Moore neighbors, the denominator in \eqref{eq:peak-CSD} is $(1-p^9)/9$, and
\begin{equation*}
	\begin{split}
		w_1^{\rm peak}(t) = p^8q, \quad w_2^{\rm peak}(t) = 2(p^{10}+p^{12})q^2, \quad w_3^{\rm peak}(t) = \frac{4}{3}(5p^{12}+8p^{14}+4p^{15}+2p^{16})q^3.
	\end{split}
\end{equation*}
Therefore,
\begin{equation*}
	\begin{split}
		\P(S_u^{\rm peak}(t)=1) &= \frac{9p^8q}{1-p^9}, \quad \P(S_u^{\rm peak}(t)=2) = \frac{18(p^{10}+p^{12})q^2}{1-p^9}, \\
		\P(S_u^{\rm peak}(t)=3) &= \frac{12(5p^{12}+8p^{14}+4p^{15}+2p^{16})q^3}{1-p^9}.
	\end{split}
\end{equation*}
As $k$ increases, the combinatorial complexity of $\mathcal{C}_k^{\rm peak}(t)$ grows quickly, so it is difficult to obtain closed-form expressions for $w_k^{\rm peak}(t)$. Nevertheless, $w_k^{\rm peak}(t)$ can be evaluated numerically by enumerating cluster shapes or approximated via Monte-Carlo methods.
\end{example}

\begin{remark}{\bf [An alternative representation.]}
By definition \eqref{eq:w-peak}, $\sum_{j\ge1} w_j^{\rm peak}(t)$ is the probability that $t$ is a local maximum above $u$ whose associated cluster has some finite size:
\begin{equation*}
	\sum_{j=1}^\infty w_j^{\rm peak}(t) = \P(t \ \text{\rm is a local maximum and } X_t>u).
\end{equation*}
Thus \eqref{eq:peak-CSD} can equivalently be written as
\begin{equation}\label{eq:peak-CSD-w}
\P(S_u^{\rm peak}(t)=k) = \frac{w_k^{\rm peak}(t)}{\sum_{j=1}^\infty w_j^{\rm peak}(t)}.
\end{equation}
\end{remark}

\begin{remark}{\bf [Empirical peak-based distribution for nonstationary fields.]}
	We now verify that \eqref{eq:peak-CSD} is consistent with the empirical peak-based cluster size distribution obtained from simulations. Suppose we simulate $M$ independent realizations of the nonstationary random field. For a fixed site $t$, the empirical probability mass function is
	\begin{equation*}
		\begin{split}
			&\quad \frac{\#\{\text{realizations where $t$ is a peak above $u$ with associated cluster of size $k$} \}}{\#\{\text{realizations where $t$ is a peak above $u$} \}} \\
			&= \frac{\#\{\text{realizations where $t$ is a peak above $u$ with associated cluster of size $k$} \}/M}{\#\{\text{realizations where $t$ is a peak above $u$} \}/M} \\
			&\overset{a.s.}{\to} \frac{\P(\text{$t$ is a peak above $u$ with associated cluster of size $k$})}{\P(\text{$t$ is a peak above $u$})}\\
			&=\frac{w_k^{\rm peak}(t)}{\sum_{j=1}^\infty w_j^{\rm peak}(t)}, \qquad \text{as } M\to \infty,
		\end{split}
	\end{equation*}
	where the convergence follows from the strong law of large numbers. Thus, as the number of simulations increases, empirical relative frequencies converge to the theoretical peak-based cluster size distribution.
\end{remark}

\begin{remark}{\bf [Empirical peak-based distribution for stationary fields.]}
	If the random field $X_t$ is stationary, then $w_k^{\rm peak}(t)$ and $S_u^{\rm peak}(t)$ do not depend on $t$, so we simply write $w_k^{\rm peak}$ and $S_u^{\rm peak}$. In this case,
	\begin{equation*}
		\begin{split}
			\P(S_u^{\rm peak}=k) = \frac{w_k^{\rm peak}}{\sum_{j=1}^\infty w_j^{\rm peak}}, \quad k=1,2,\ldots.
		\end{split}
	\end{equation*}
	If the observation domain contains $N^d$ lattice points, then $N^d w_k^{\rm peak}$ is the expected number of local maxima above $u$ whose associated clusters have size $k$. A natural empirical estimator of $w_k^{\rm peak}$ is 
	\[
	\widehat{w}_k^{\rm peak} = \frac{\#\{\text{local maxima above $u$ with associated cluster of size $k$}\}}{N^d}.
	\]
	 Similarly, $N^d \sum_{j=1}^\infty w_j^{\rm peak}$ is the expected total number of local maxima above $u$. Therefore, we have the following empirical relation:
	\begin{equation*}
		\begin{split}
			&\quad \frac{\#\{\text{local maxima above $u$ with associated cluster of size $k$} \}}{\#\{\text{local maxima above $u$} \}} \\
			&= \frac{\#\{\text{local maxima above $u$ with associated cluster of size $k$} \}/N^d}{\#\{\text{local maxima above $u$} \}/N^d} \\
			&= \frac{\widehat{w}_k^{\rm peak}}{\sum_{j=1}^\infty \widehat{w}_j^{\rm peak}} \overset{a.s.}{\to} \frac{w_k^{\rm peak}}{\sum_{j=1}^\infty w_j^{\rm peak}}, \qquad \text{as } N\to \infty,
		\end{split}
	\end{equation*}
where the convergence follows from the strong law of large numbers under stationarity and ergodicity.
\end{remark}

\begin{remark}{\bf [Monte-Carlo estimation of $w_k^{\rm peak}$ for stationary fields.]}\label{remark:MC-peak}
	For large $k$, the quantities $w_k^{\rm peak}(t)$ are typically extremely small and difficult to compute exactly. Direct estimation is usually inefficient, as it would require simulating realizations where the site $t$ is a local maximum above the threshold $u$, an event that occurs with very low probability. We restrict attention here to stationary random fields, where $w_k^{\rm peak}(t)$ does not depend on the location $t$ and can be written simply as $w_k^{\rm peak}$.
	
	We propose a Monte-Carlo estimator analogous to the refined procedure in Section \ref{sec:MC}. Simulate $M$ independent realizations of the field on a domain of size $N^d$, and select an interior subdomain of size $n^d$. For the $m$-th realization, let $L_m$ denote the number of local maxima above $u$ that fall within the $n^d$ subdomain, whose associated clusters have size $k$ and do not touch the boundary of the full $N^d$ domain (to avoid boundary effects). Then the \textit{Monte-Carlo estimator} of $w_k^{\rm peak}$ is 
	\begin{equation}\label{eq:MC3}
		\widetilde{w}_k^{\rm peak} = \frac{1}{n^d M} \sum_{m=1}^M L_m.
	\end{equation}
	
	By aggregating peak events across the $n^d$ interior sites, each realization contributes substantially more information than a single-site estimator, yielding improved statistical efficiency, especially when $k$ is large and $w_k^{\rm peak}$ is small.
\end{remark}

\begin{remark}{\bf [High thresholds.]} \label{remarkd:high-u}
	When the threshold $u$ is high, then each cluster typically contains a unique local maximum with probability close to one. For a stationary random field and large $u$, the peak-based and exact cluster size distributions therefore nearly coincide:
	\begin{equation*}
			\P(S_u^{\rm peak}=k) \approx \P(S_u=k), \quad k=1,2,\ldots.
	\end{equation*}
Furthermore, at high thresholds, large clusters are rarely observed, so both $w_k$ and $w_k^{\rm peak}$ are negligible for large $k$. As a result, the computation of both the exact and peak-based cluster size distributions is substantially simplified.
\end{remark}

\section{Numerical Results and Simulations}
We present numerical and simulation results that illustrate the practical performance of the proposed methods for computing the cluster size distribution and the associated quantities $w_k$. For each example (except the nonstationary case where only peak-based distributions are available), we report both theoretical cluster size distributions, including exact and peak-based formulations, and empirical counterparts obtained from simulated random fields.

As discussed earlier, for general random fields, particularly in dimensions $d\ge 2$, explicit expressions for $w_k$, $w_k^{\rm peak}$, $\P(S_u=k)$, and $\P(S_u^{\text{peak}}=k)$ are typically unavailable. Consequently, theoretical values reported below were evaluated numerically or by Monte–Carlo methods except where explicit formulas exist and are indicated.

In the tables below, we report up to the first ten values of $w_k$ and $w_k^{\text{peak}}$, together with their empirical estimates $\widehat{w}_k$ and $\widehat{w}_k^{\text{peak}}$, all rounded to five decimal places. For completeness, we also report the corresponding partial sums $\sum_{k=1}^\infty w_k$, $\sum_{k=1}^\infty w_k^{\text{peak}}$,  $\sum_{k=1}^\infty \widehat{w}_k$ and $\sum_{k=1}^\infty \widehat{w}_k^{\text{peak}}$ over the computed range. Normalizing by these sums yields the cluster size probability mass functions. Specifically, the ratios
\[
\frac{w_k}{\sum_{k=1}^\infty w_k}=\P(S_u=k),  \quad \frac{\widehat{w}_k}{\sum_{k=1}^\infty \widehat{w}_k}, \quad \frac{w_k^{\text{peak}}}{\sum_{k=1}^\infty w_k^{\text{peak}}}=\P(S_u^{\text{peak}}=k), \quad \frac{\widehat{w}_k^{\text{peak}}}{\sum_{k=1}^\infty \widehat{w}_k^{\text{peak}}}
\]
correspond, respectively, to the theoretical and empirical exact cluster size distributions, and to the theoretical and empirical peak-based cluster size distributions. For each example, these distributions are also displayed graphically as histograms to facilitate visual comparison.

\begin{table}[t!]
	\centering
	\caption{Gaussian white noise process on $\mathbb{Z}$.}
	\label{tab:1D_WN_fig_table}
	\vspace{4mm}
	\begin{minipage}{\textwidth}
		\centering
		(a) Theoretical and empirical exact, peak-based cluster size distributions.
		\vspace{4mm}
		
		\begin{tabular}{cc}
			\includegraphics[trim=10 20 20 0,clip,width=2.8in]{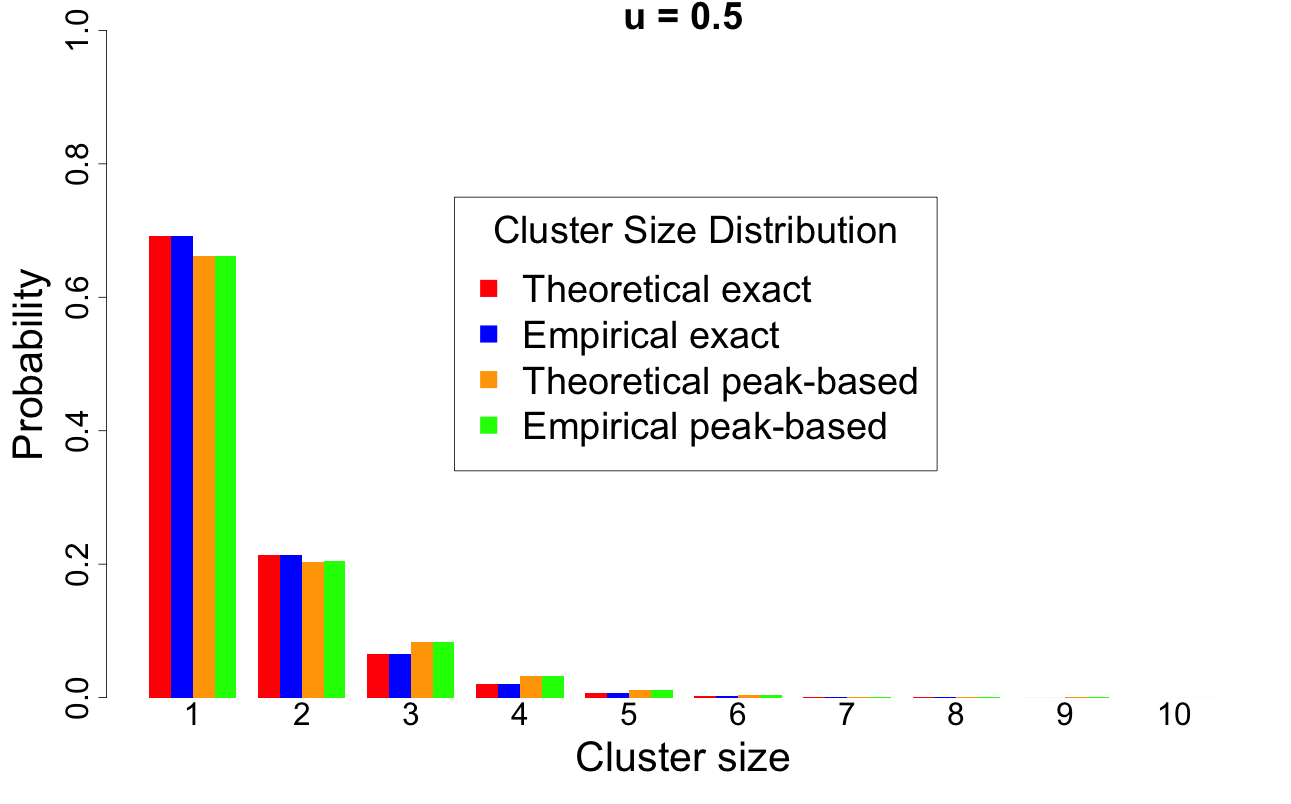} &
			\includegraphics[trim=10 20 20 0,clip,width=2.8in]{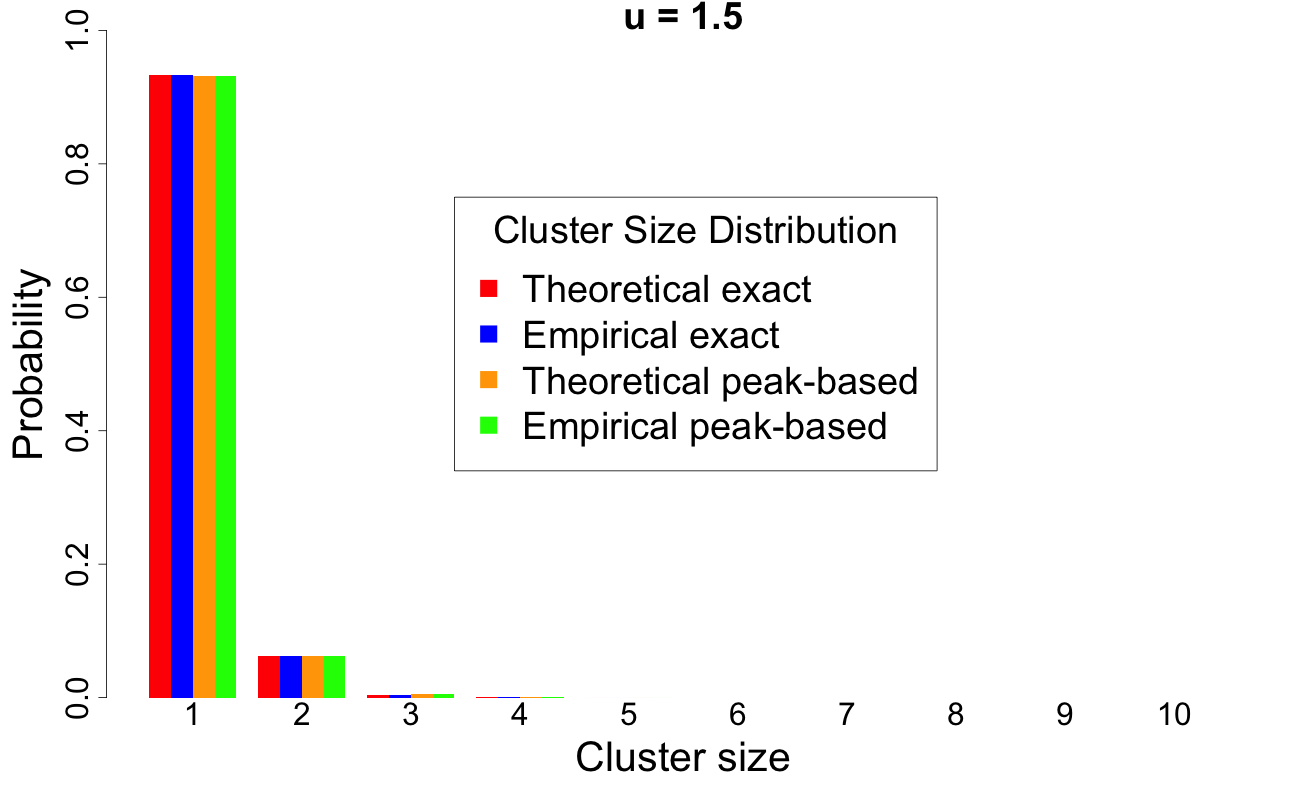}
		\end{tabular}
	\end{minipage}
	
	\vspace{4mm}
	
	\begin{minipage}{\textwidth}
		\centering
		(b) Theoretical and empirical values of $w_k$, $w_k^{\rm peak}$ and probabilities (in parentheses).
		\vspace{2mm}
		
		\begin{tabular}{c c l l l l}
			\toprule
		    & Size $k$
			& $w_k \left(\frac{w_k}{\sum w_k}\right)$
			& $\widehat{w}_k \left(\frac{\widehat{w}_k}{\sum \widehat{w}_k}\right)$
			& $w_k^{\rm peak} \left(\frac{w_k^{\text{peak}}}{\sum w_k^{\text{peak}}}\right)$
			& $\widehat{w}_k^{\rm peak} \left(\frac{\widehat{w}_k^{\text{peak}}}{\sum \widehat{w}_k^{\text{peak}}}\right)$ \\
			\midrule
			
			\multirow{7.5}{*}{\rotatebox{90}{$u=0.5$}}
			&1  & .14755 (\textbf{.69100}) & .14752 (\textbf{.69100}) & .14800 (\textbf{.66100}) & .14752 (\textbf{.66100}) \\
			&2  & .04553 (\textbf{.21300}) & .04551 (\textbf{.21300}) & .04550 (\textbf{.20400}) & .04551 (\textbf{.20400}) \\
			&3  & .01402 (\textbf{.06570}) & .01404 (\textbf{.06580}) & .01870 (\textbf{.08360}) & .01404 (\textbf{.08390}) \\
			&4  & .00433 (\textbf{.02030}) & .00433 (\textbf{.02030}) & .00723 (\textbf{.03240}) & .00433 (\textbf{.03240}) \\
			&5  & .00134 (\textbf{.00629}) & .00134 (\textbf{.00627}) & .00269 (\textbf{.01200}) & .00134 (\textbf{.01200}) \\
			&6  & .00041 (\textbf{.00192}) & .00041 (\textbf{.00193}) & .00096 (\textbf{.00429}) & .00041 (\textbf{.00431}) \\
			&$\sum_{k=1}^\infty$
			& .21337 & .21334 & .22300 & .21334 \\
			\midrule
			
			\multirow{5.5}{*}{\rotatebox{90}{$u=1.5$}}
			&1  & .05820 (\textbf{.93300}) & .05818 (\textbf{.93300}) & .05823 (\textbf{.93100}) & .05818 (\textbf{.93200}) \\
			&2  & .00390 (\textbf{.06250}) & .00389 (\textbf{.06230}) & .00390 (\textbf{.06240}) & .00389 (\textbf{.06220}) \\
			&3  & .00026 (\textbf{.00418}) & .00026 (\textbf{.00417}) & .00035 (\textbf{.00559}) & .00026 (\textbf{.00554}) \\
			&4  & .00002 (\textbf{.00032}) & .00002 (\textbf{.00028}) & .00003 (\textbf{.00054}) & .00002 (\textbf{.00046}) \\
			&$\sum_{k=1}^\infty$
			& .06240 & .06234 & .06251 & .06234 \\
			\bottomrule
		\end{tabular}
	\end{minipage}
	
\end{table}

\begin{table}[t!]
	\centering
	\caption{Gaussian process on $\mathbb{Z}$
		with covariance $C(t,s)=e^{-(t-s)^2}$.}
	\label{tab:1D_GRF_fig_table}
	\vspace{4mm}
	\begin{minipage}{\textwidth}
		\centering
		(a) Theoretical and empirical exact, peak-based cluster size distributions.
		\vspace{4mm}
		
		\begin{tabular}{cc}
			\includegraphics[trim=10 20 20 0,clip,width=2.8in]{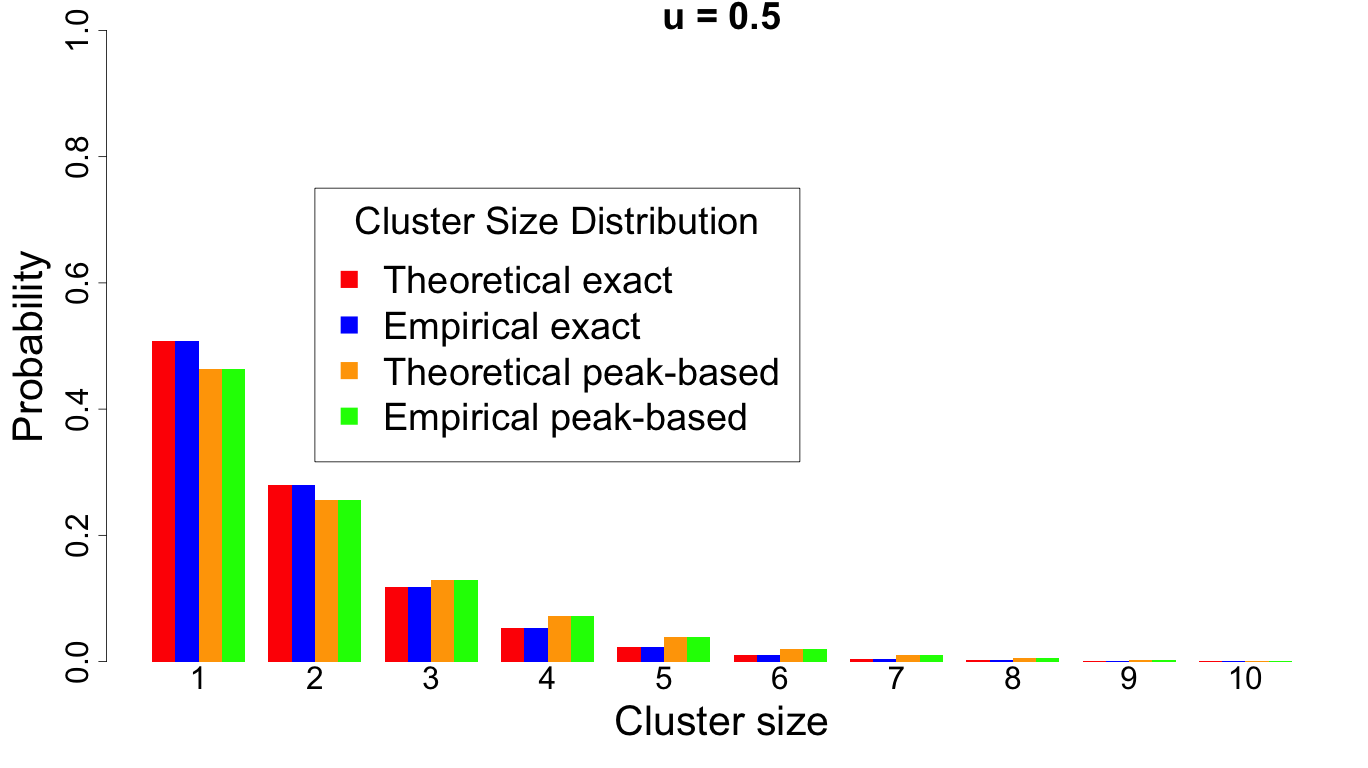} &
			\includegraphics[trim=10 20 20 0,clip,width=2.8in]{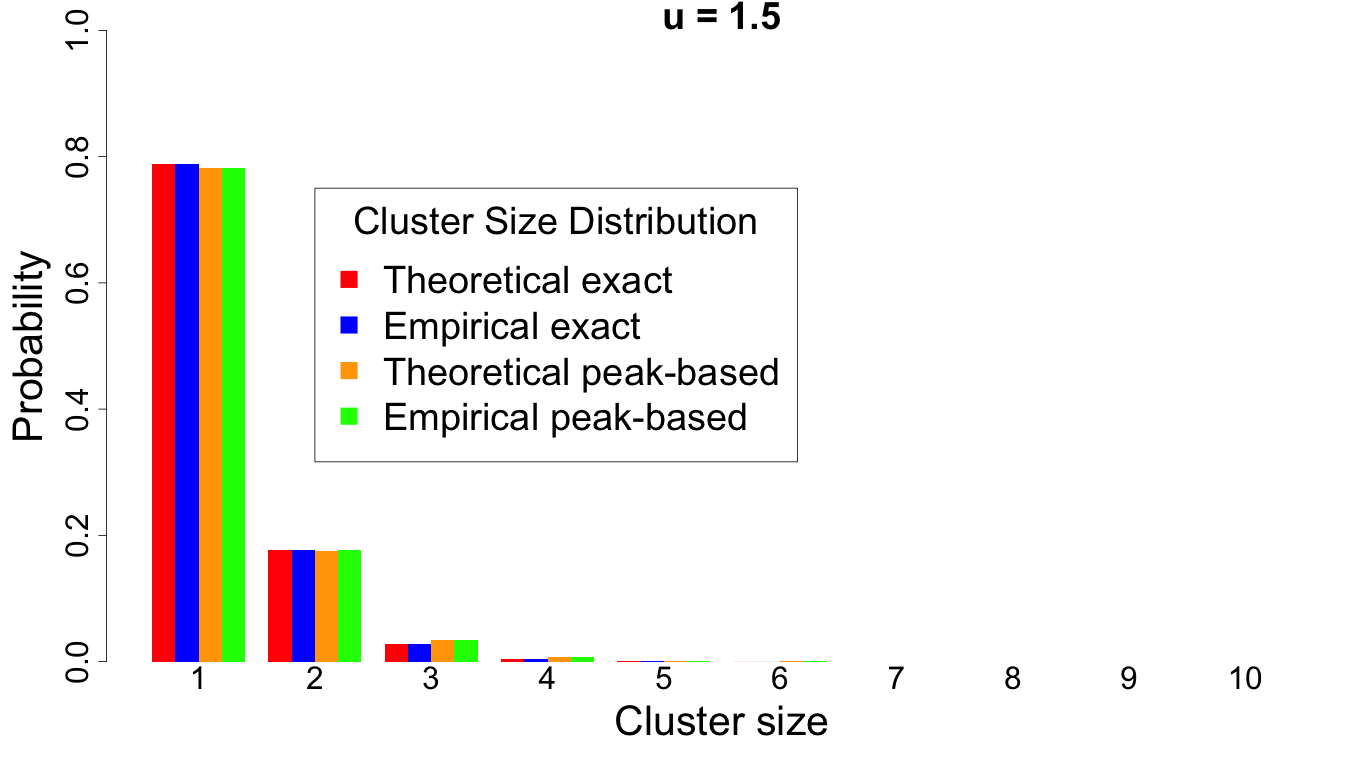}
		\end{tabular}
	\end{minipage}
	
	\vspace{4mm}
	
	\begin{minipage}{\textwidth}
		\centering
		(b) Theoretical and empirical values of $w_k$, $w_k^{\rm peak}$ and probabilities (in parentheses).
		\vspace{2mm}
		
		\begin{tabular}{c c l l l l}
			\toprule
			& Size $k$
			& $w_k \left(\frac{w_k}{\sum w_k}\right)$
			& $\widehat{w}_k \left(\frac{\widehat{w}_k}{\sum \widehat{w}_k}\right)$
			& $w_k^{\rm peak} \left(\frac{w_k^{\text{peak}}}{\sum w_k^{\text{peak}}}\right)$
			& $\widehat{w}_k^{\rm peak} \left(\frac{\widehat{w}_k^{\text{peak}}}{\sum \widehat{w}_k^{\text{peak}}}\right)$ \\
			\midrule
			
			\multirow{7.5}{*}{\rotatebox{90}{$u=0.5$}}
			& 1  & .08370 (\textbf{.50800}) & .08380 (\textbf{.50800}) & .08370 (\textbf{.46400}) & .08380 (\textbf{.46400}) \\
			& 2  & .04620 (\textbf{.28000}) & .04620 (\textbf{.28000}) & .04620 (\textbf{.25600}) & .04630 (\textbf{.25600}) \\
			& 3  & .01950 (\textbf{.11800}) & .01940 (\textbf{.11800}) & .02320 (\textbf{.12900}) & .02320 (\textbf{.12900}) \\
			& 4  & .00865 (\textbf{.05250}) & .00866 (\textbf{.05250}) & .01310 (\textbf{.07270}) & .01320 (\textbf{.07290}) \\
			& 5  & .00381 (\textbf{.02310}) & .00379 (\textbf{.02300}) & .00696 (\textbf{.03860}) & .00695 (\textbf{.03850}) \\
			& 6  & .00168 (\textbf{.01020}) & .00168 (\textbf{.01020}) & .00359 (\textbf{.01990}) & .00361 (\textbf{.02000}) \\
			& $\sum_{k=1}^\infty$ & .16500 & .16500 & .18000 & .18000 \\
			\midrule
			
			\multirow{5.5}{*}{\rotatebox{90}{$u=1.5$}}
			& 1  & .04210 (\textbf{.78900}) & .04210 (\textbf{.78900}) & .04210 (\textbf{.78100}) & .04210 (\textbf{.78100}) \\
			& 2  & .00947 (\textbf{.17700}) & .00948 (\textbf{.17800}) & .00947 (\textbf{.17600}) & .00949 (\textbf{.17600}) \\
			& 3  & .00149 (\textbf{.02800}) & .00148 (\textbf{.02780}) & .00181 (\textbf{.03360}) & .00181 (\textbf{.03350}) \\
			& 4  & .00026 (\textbf{.00483}) & .00026 (\textbf{.00481}) & .00040 (\textbf{.00740}) & .00040 (\textbf{.00733}) \\
			& $\sum_{k=1}^\infty$ & .05340 & .05340 & .05390 & .05390 \\
			\bottomrule
		\end{tabular}
	\end{minipage}
	
\end{table}

\subsection{Gaussian processes on $\mathbb{Z}$.}
In one dimension we consider three types of Gaussian processes: (i) Gaussian white noise, (ii) stationary correlated Gaussian processes, and (iii) a nonstationary Gaussian process. Results are summarized in Tables \ref{tab:1D_WN_fig_table}–\ref{tab:NS_1D_fig_table}. In all experiments we simulated $M_1=10{,}000$ independent realizations of the process on an integer lattice of length $N_1= 1{,}500$.

\subsubsection{Stationary Gaussian processes on $\mathbb{Z}$}
Let $\Phi$ denote the standard normal CDF, and let $p=\Phi(u)$ and $q=1-p$. For Gaussian white noise (Example \ref{example:WN_1D}), the theoretical per-site probabilities are $w_k=p^2q^k$ for $k\ge 1$, hence $\sum_{k=1}^\infty w_k= pq$ and the cluster size distribution is geometric as in \eqref{eq:WN_1D}. The peak-based analytic results of Example \ref{example:peak-GWN} give
\[
w_1^{\rm peak} = p^2q, \quad w_k^{\rm peak} = (k+1)p^2q^k / 3 \text{ for } k\ge 2,
\]
so $\sum_{k=1}^\infty w_k^{\rm peak}= (1-p^3)/3$ and the peak-based cluster size distribution in \eqref{eq:peak-GWN-1D} follows. 

For a stationary correlated Gaussian process with covariance $C(t,s)=e^{-(t-s)^2}$,  theoretical $w_k$ and $\sum_{k=1}^\infty w_k$ were computed from \eqref{eq:w_1D} and \eqref{eq:w_1D_sum}; theoretical $w_k^{\text{peak}}$ and $\sum_{k=1}^\infty w_k^{\text{peak}}$ were obtained from the numerator and denominator of \eqref{eq:peak-GP}, respectively. The empirical estimators used in simulations are
\begin{equation}\label{eq:w-empirical-sim}
	\begin{split}
		\widehat{w}_k&=\frac{\#\{\text{size-$k$ clusters above $u$} \}}{M_1N_1}, \quad \sum_{k=1}^\infty \widehat{w}_k = \frac{\#\{\text{all clusters above $u$} \}}{M_1N_1},
	\end{split}
\end{equation}
and for peak-based quantities
\begin{equation}\label{eq:w-peak-empirical-sim}
	\begin{split}
		\widehat{w}_k^{\rm peak} &= \frac{\#\{\text{local maxima above $u$ with  associated cluster of size $k$}\}}{M_1N_1}, \\
		\sum_{k=1}^\infty \widehat{w}_k^{\rm peak} &= \frac{\#\{\text{local maxima above $u$}\}}{M_1N_1}.
	\end{split}
\end{equation}

Tables \ref{tab:1D_WN_fig_table} and \ref{tab:1D_GRF_fig_table} report theoretical and empirical values (with normalized probabilities in parentheses) and display histogram comparisons. In both examples the empirical distributions closely match theory. Moreover, as the threshold increases from $u=0.5$ to $u=1.5$, the peak-based cluster size distribution approach the exact cluster size distribution, confirming the high-threshold approximation discussed in Remark \ref{remarkd:high-u}. 

\begin{table}[t!]
	\centering
	\caption{Nonstationary Gaussian $Y_t = X_t + \cos(\pi t)$, with peak-based inference at $t=0$.}
	\label{tab:NS_1D_fig_table}
	\vspace{4mm}
	\begin{minipage}{\textwidth}
		\centering
		(a) Theoretical and empirical peak-based cluster size distributions.
		\vspace{4mm}
		
		\begin{tabular}{cc}
			\includegraphics[trim=10 20 20 0,clip,width=2.8in]{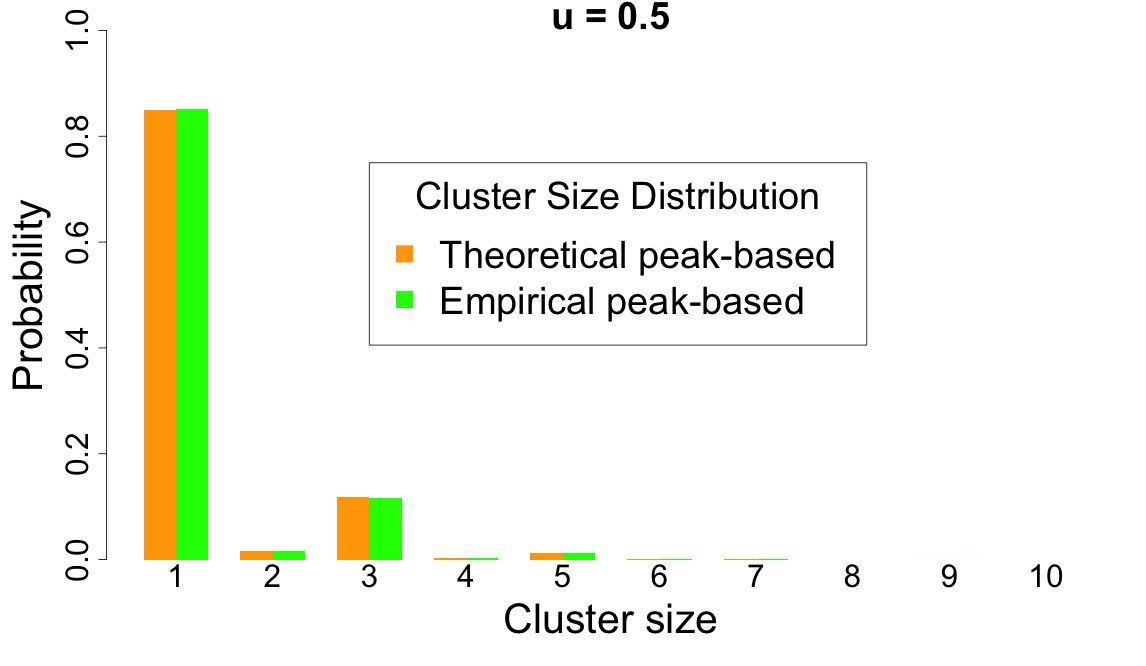} &
			\includegraphics[trim=10 20 20 0,clip,width=2.8in]{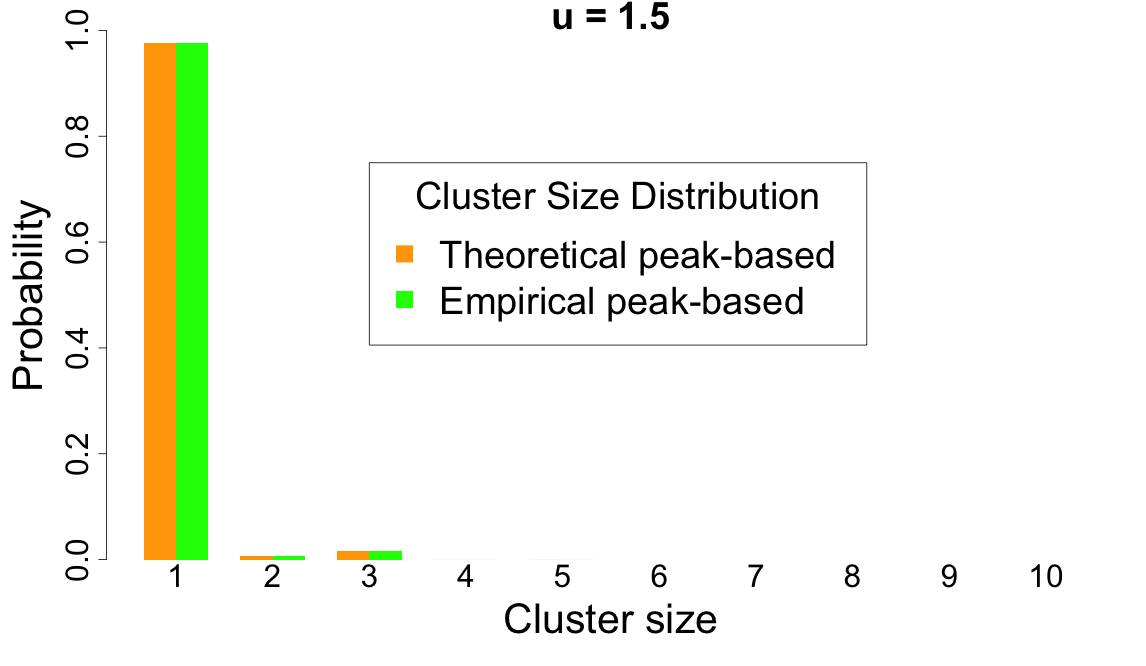}
		\end{tabular}
	\end{minipage}
	
	\vspace{4mm}
	
	\begin{minipage}{\textwidth}
		\centering
		(b) Theoretical and empirical values of $w_k^{\rm peak}$ and probabilities (in parentheses).
		\vspace{2mm}
		
		\begin{tabular}{c l l | l l}
			\toprule
			& \multicolumn{2}{c|}{$u=0.5$} & \multicolumn{2}{c}{$u=1.5$} \\
			\text{Size $k$}
			& $w_k^{\rm peak} \left(\frac{w_k^{\text{peak}}}{\sum w_k^{\text{peak}}}\right)$
			& $\widehat{w}_k^{\rm peak} \left(\frac{\widehat{w}_k^{\text{peak}}}{\sum \widehat{w}_k^{\text{peak}}}\right)$
			& $w_k^{\rm peak} \left(\frac{w_k^{\text{peak}}}{\sum w_k^{\text{peak}}}\right)$
			& $\widehat{w}_k^{\rm peak} \left(\frac{\widehat{w}_k^{\text{peak}}}{\sum \widehat{w}_k^{\text{peak}}}\right)$ \\
			\midrule
			1  & .57400 (\textbf{.85000}) & .57600 (\textbf{.85100})
			& .30000 (\textbf{.97600}) & .30000 (\textbf{.97600}) \\
			2  & .01050 (\textbf{.01550}) & .01050 (\textbf{.01560})
			& .00232 (\textbf{.00754}) & .00230 (\textbf{.00750}) \\
			3  & .07930 (\textbf{.11700}) & .07940 (\textbf{.11700})
			& .00493 (\textbf{.01600}) & .00488 (\textbf{.01590}) \\
			4  & .00150 (\textbf{.00222}) & .00148 (\textbf{.00218})
			& .00004 (\textbf{.00011}) & .00003 (\textbf{.00011}) \\
			5  & .00874 (\textbf{.01290}) & .00854 (\textbf{.01260})
			& .00006 (\textbf{.00018}) & .00004 (\textbf{.00015}) \\
			$\sum_{k=1}^\infty$
			& .67600 & .67700
			& .30700 & .30700 \\
			\bottomrule
		\end{tabular}
	\end{minipage}
	
\end{table}

\subsubsection{Nonstationary Gaussian processes on $\mathbb{Z}$}
In the nonstationary setting, only the peak-based cluster size distributions are available, and the peak-based quantities $w_k^{\rm peak}(t)$ generally depend on the spatial location $t$. For simplicity, we focus at a fixed reference point $t_0=0$, and write $w_k^{\rm peak}(t_0)=w_k^{\rm peak}$. 

To obtain robust empirical estimates with repeated structure, we increase the number of local maxima that share the same local behavior at 0 by introducing a periodic mean function. Specifically, we consider
\[
Y_t = X_t + \cos(\pi t), \quad t\in \Z,
\]
where $X_t$ is a centered stationary Gaussian process with covariance $C(t,s)=e^{-(t-s)^2}$. Due to stationarity of $X_t$ and periodicity of $\cos(\pi t)$, the local behaviors of $Y_t$ at sites $t\in D_0=\{0, \pm 2, \pm 4, \ldots\}$ (within the simulation domain) are identical. Consequently, the peak-based cluster size distributions coincide at these locations.

Theoretical values of $w_k^{\text{peak}}$ and $\sum_{k=1}^\infty w_k^{\text{peak}}$ are obtained from the numerator and denominator of \eqref{eq:peak-GP}, respectively. Empirical estimators exploit the repeated sites in $D_0$: 
\begin{equation*}
	\begin{split}
		\widehat{w}_k^{\rm peak}(t) &= \frac{\#\{\text{local maxima at $t\in D_0$ above $u$ with associated cluster of size $k$}\}}{M_1|D_0|}, \\
		\sum_{k=1}^\infty \widehat{w}_k^{\rm peak} &= \frac{\#\{\text{local maxima at $t\in D_0$ above $u$}\}}{M_1|D_0|},
	\end{split}
\end{equation*}
where, for our experiments, $|D_0|\approx N_1/2$.

Table \ref{tab:NS_1D_fig_table} shows theoretical and empirical values of $w_k^{\rm peak}$ and $\widehat{w}_k^{\rm peak}$, together with normalized probability masses and histogram comparisons. For both threshold levels $u=0.5$ and $u=1.5$, the empirical exact peak-based cluster size distributions agree well with theoretical values.

\subsection{Gaussian and chi-squared fields on $\mathbb{Z}^2$.}
On $\Z^2$ we consider four model classes: (i) stationary correlated Gaussian fields under nearest neighbors, (ii) the same field under Moore neighbors, (iii) Gaussian white noise under Moore neighbors, and (iv) standardized chi-squared fields under Moore neighbors. Results are illustrated in Tables \ref{tab:2D_GRF_fig_table}–\ref{tab:2D_Chisq_Moore_fig_table}.

In general, theoretical $w_k$ are computed from \eqref{eq:w_d}, while theoretical $w_k^{\text{peak}}$ and $\sum_{k=1}^\infty w_k^{\text{peak}}$ follow from \eqref{eq:w-peak} and the denominator of \eqref{eq:peak-CSD}, respectively. For Gaussian white noise (Table \ref{tab:2D_GRF_WN_Moore_fig_table}), these quantities admit closed-form expressions, as derived in Examples \ref{example:WNF} and \ref{example:WNF-peak}.

Because exact numerical evaluation becomes costly for large cluster sizes, we compute theoretical values exactly for $k\le 6$ and estimate larger $k$ by Monte–Carlo. Concretely, for $k\ge 7$ we use the refined estimator \eqref{eq:MC2} to estimate $w_k$ and $\sum_{k=1}^\infty w_k$, and the Monte–Carlo procedure of Remark \ref{remark:MC-peak} to estimate $w_k^{\text{peak}}$. These Monte-Carlo values were produced from $15{,}000$ independent realizations of the field on a $100\times 100$ integer lattice, with inference restricted to the interior $50\times 50$ subdomain to mitigate boundary bias.

Since simulating correlated random fields on large lattices with a prescribed covariance structure can be computationally demanding, we adopt an alternative strategy to facilitate empirical estimation using multiple moderately sized independent realizations. Specifically, we simulate $M_2=2{,}000$ independent realizations of the field on an integer lattice of size $N_2^2= 300\times 300$. The empirical estimators of $w_k$ and $\sum_{k=1}^\infty w_k$ are provided in \eqref{eq:w-empirical-sim}, with $M_1$ and $N_1$ replaced by $M_2$ and $N_2^2$, respectively. Similarly, the empirical estimators for the peak-based quantities are given by \eqref{eq:w-peak-empirical-sim}, with the same substitutions.

Tables \ref{tab:2D_GRF_fig_table} and \ref{tab:2D_GRF_Moore_fig_table} summarize numerical and simulation results for correlated Gaussian fields on $\Z^2$ with covariance $C(t,s)=e^{-\|t-s\|^2}$, under nearest and Moore neighbors. The results show that, when moving from nearest to Moore neighbors, as well as from one-dimensional to two-dimensional settings, small clusters occur less frequently while larger clusters become more prevalent. This behavior is a direct consequence of the increased degree of connectivity, which promotes the formation of larger connected components. 

In Table \ref{tab:2D_Chisq_Moore_fig_table}, the standardized chi-squared field under consideration is $Z_t=(X_t^2+Y_t^2)/2-1$, where $X$ and $Y$ are independent Gaussian fields with covariance $C(t,s)=e^{-\|t-s\|^2}$. Alternatively, one may take $X$ and $Y$ to be independent Gaussian white noise; in this case, the analytic formulas derived in Examples~\ref{example:WNF} and~\ref{example:WNF-peak} apply directly.

Across all four examples presented in Tables \ref{tab:2D_GRF_fig_table}–\ref{tab:2D_Chisq_Moore_fig_table}, the empirical exact and peak-based distributions closely match their theoretical counterparts. As the threshold increases from $u=0.5$ to $u=1.5$, the peak-based cluster size distribution converges toward the exact distribution, consistent with the high-threshold approximation described in Remark~\ref{remarkd:high-u}.

\begin{table}[t!]
	\centering
	\caption{Gaussian field on $\Z^2$ with covariance
		$C(t,s)=e^{-\|t-s\|^2}$
		under nearest neighbors.}
	\label{tab:2D_GRF_fig_table}
	\vspace{4mm}
	\begin{minipage}{\textwidth}
		\centering
		(a) Theoretical and empirical exact, peak-based cluster size distributions.
		\vspace{4mm}
		
		\begin{tabular}{cc}
			\includegraphics[trim=10 20 20 0,clip,width=2.8in]{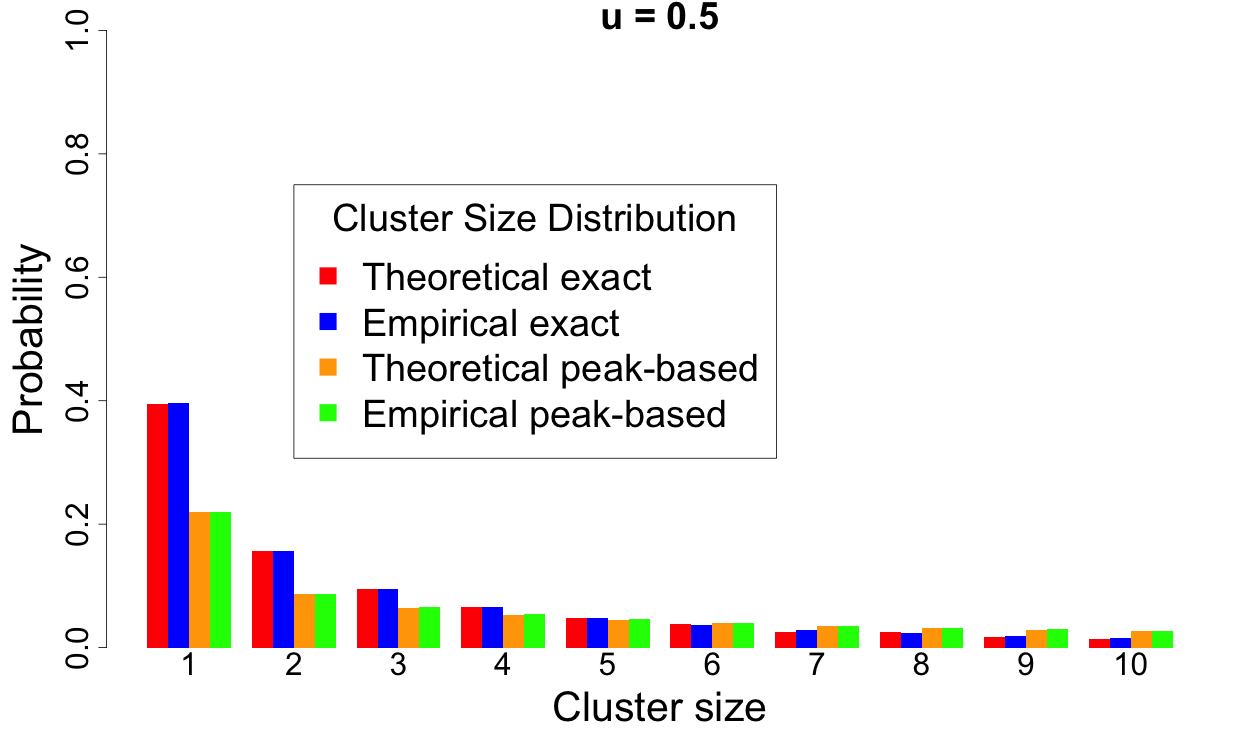} &
			\includegraphics[trim=10 20 20 0,clip,width=2.8in]{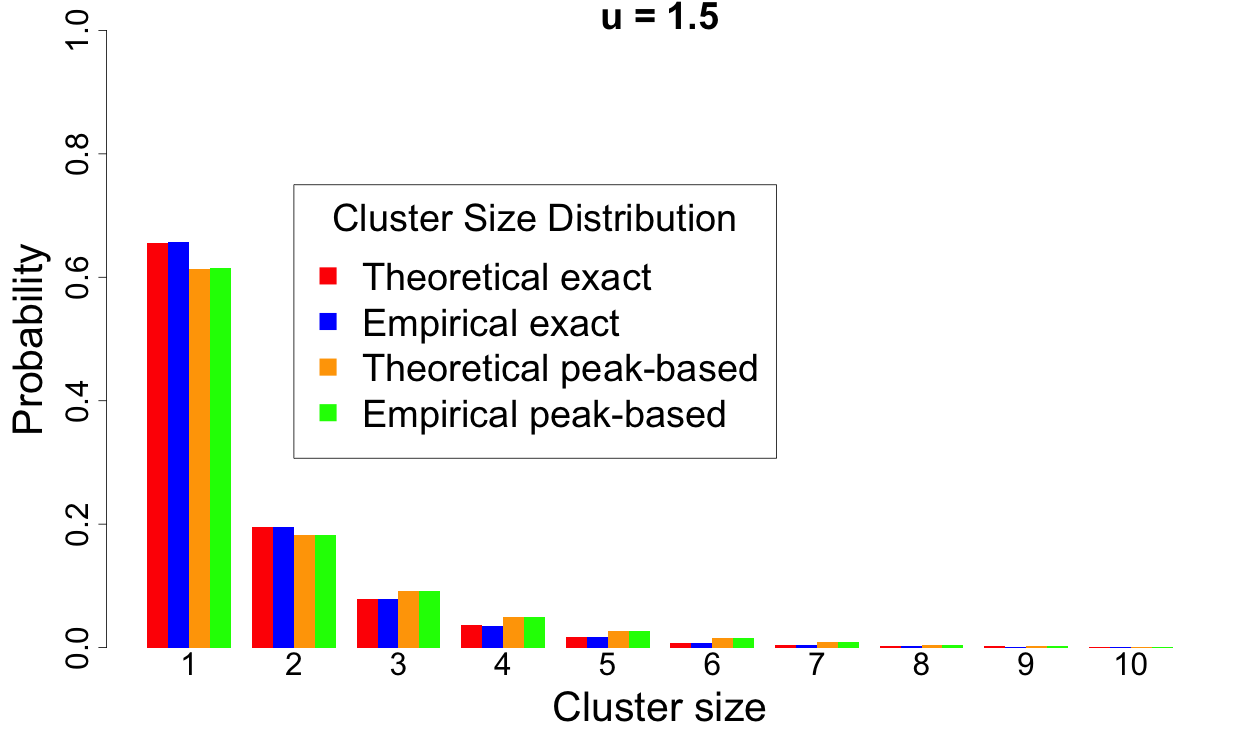}
		\end{tabular}
	\end{minipage}
	
	\vspace{4mm}
	
	\begin{minipage}{\textwidth}
		\centering
		(b) Theoretical and empirical values of $w_k$, $w_k^{\rm peak}$ and probabilities (in parentheses).
		\vspace{2mm}
		
		\begin{tabular}{c c l l l l}
			\toprule
			& Size $k$
			& $w_k \left(\frac{w_k}{\sum w_k}\right)$
			& $\widehat{w}_k \left(\frac{\widehat{w}_k}{\sum \widehat{w}_k}\right)$
			& $w_k^{\rm peak} \left(\frac{w_k^{\text{peak}}}{\sum w_k^{\text{peak}}}\right)$
			& $\widehat{w}_k^{\rm peak} \left(\frac{\widehat{w}_k^{\text{peak}}}{\sum \widehat{w}_k^{\text{peak}}}\right)$ \\
			\midrule
			
			\multirow{11}{*}{\rotatebox{90}{$u=0.5$}}
			& 1  & .02463 (\textbf{.39536}) & .02460 (\textbf{.39600}) & .02463 (\textbf{.22015}) & .02469 (\textbf{.22000}) \\
			& 2  & .00974 (\textbf{.15638}) & .00971 (\textbf{.15600}) & .00974 (\textbf{.08710}) & .00976 (\textbf{.08710}) \\
			& 3  & .00592 (\textbf{.09501}) & .00588 (\textbf{.09430}) & .00723 (\textbf{.06463}) & .00731 (\textbf{.06520}) \\
			& 4  & .00414 (\textbf{.06651}) & .00410 (\textbf{.06580}) & .00596 (\textbf{.05331}) & .00603 (\textbf{.05380}) \\
			& 5  & .00304 (\textbf{.04874}) & .00299 (\textbf{.04800}) & .00506 (\textbf{.04524}) & .00511 (\textbf{.04560}) \\
			& 6  & .00233 (\textbf{.03744}) & .00229 (\textbf{.03680}) & .00437 (\textbf{.03906}) & .00451 (\textbf{.04030}) \\
			& 7  & .00161 (\textbf{.02591}) & .00181 (\textbf{.02910}) & .00398 (\textbf{.03557}) & .00400 (\textbf{.03570}) \\
			& 8  & .00160 (\textbf{.02568}) & .00146 (\textbf{.02340}) & .00359 (\textbf{.03208}) & .00360 (\textbf{.03210}) \\
			& 9  & .00110 (\textbf{.01765}) & .00121 (\textbf{.01940}) & .00322 (\textbf{.02877}) & .00329 (\textbf{.02930}) \\
			& 10 & .00085 (\textbf{.01364}) & .00100 (\textbf{.01610}) & .00298 (\textbf{.02662}) & .00301 (\textbf{.02680}) \\
			& $\sum_{k=1}^\infty$ & .06254 & .06230 & .11188 & .11203 \\
			\midrule
			
			\multirow{11}{*}{\rotatebox{90}{$u=1.5$}}
			& 1 & .02686 (\textbf{.65523}) & .02647 (\textbf{.65765}) & .02686 (\textbf{.61413}) & .02689 (\textbf{.61474}) \\
			& 2 & .00800 (\textbf{.19513}) & .00785 (\textbf{.19506}) & .00800 (\textbf{.18288}) & .00802 (\textbf{.18306}) \\
			& 3 & .00325 (\textbf{.07930}) & .00317 (\textbf{.07868}) & .00401 (\textbf{.09177}) & .00401 (\textbf{.09158}) \\
			& 4 & .00148 (\textbf{.03610}) & .00144 (\textbf{.03569}) & .00216 (\textbf{.04946}) & .00216 (\textbf{.04925}) \\
			& 5 & .00069 (\textbf{.01687}) & .00067 (\textbf{.01658}) & .00118 (\textbf{.02696}) & .00117 (\textbf{.02678}) \\
			& 6 & .00034 (\textbf{.00821}) & .00033 (\textbf{.00809}) & .00066 (\textbf{.01507}) & .00066 (\textbf{.01506}) \\
			& 7 & .00014 (\textbf{.00348}) & .00016 (\textbf{.00400}) & .00037 (\textbf{.00842}) & .00036 (\textbf{.00843}) \\
			& 8 & .00010 (\textbf{.00244}) & .00008 (\textbf{.00202}) & .00021 (\textbf{.00480}) & .00020 (\textbf{.00473}) \\
			& 9 & .00008 (\textbf{.00190}) & .00004 (\textbf{.00105}) & .00012 (\textbf{.00270}) & .00012 (\textbf{.00257}) \\
			& $\sum_{k=1}^\infty$ & .04099 & .04024 & .04374 & .04375 \\
			\bottomrule
		\end{tabular}
	\end{minipage}
	
\end{table}

\begin{table}[t!]
	\centering
	\caption{Gaussian field on $\Z^2$ with covariance
		$C(t,s)=e^{-\|t-s\|^2}$
		under Moore neighbors.}
	\label{tab:2D_GRF_Moore_fig_table}
	\vspace{4mm}
	\begin{minipage}{\textwidth}
		\centering
		(a) Theoretical and empirical exact, peak-based cluster size distributions.
		\vspace{4mm}
		
		\begin{tabular}{cc}
			\includegraphics[trim=10 20 20 0,clip,width=2.8in]{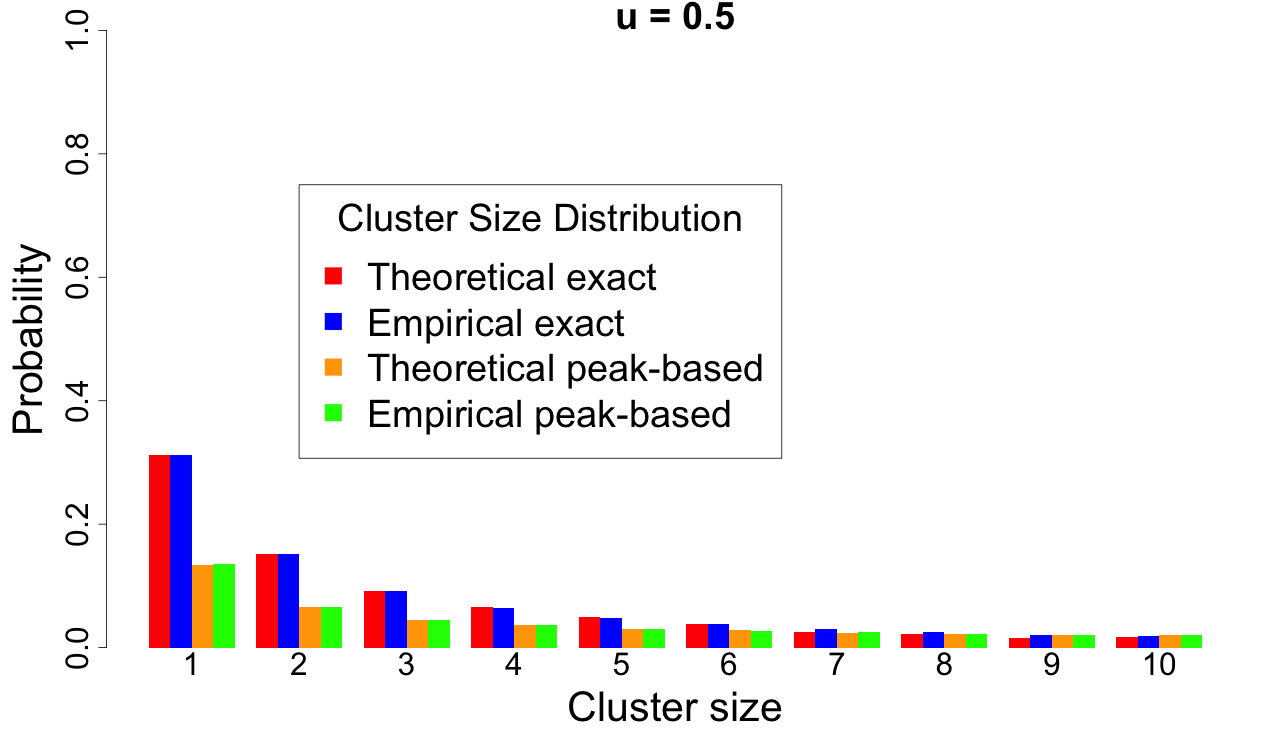} &
			\includegraphics[trim=10 20 20 0,clip,width=2.8in]{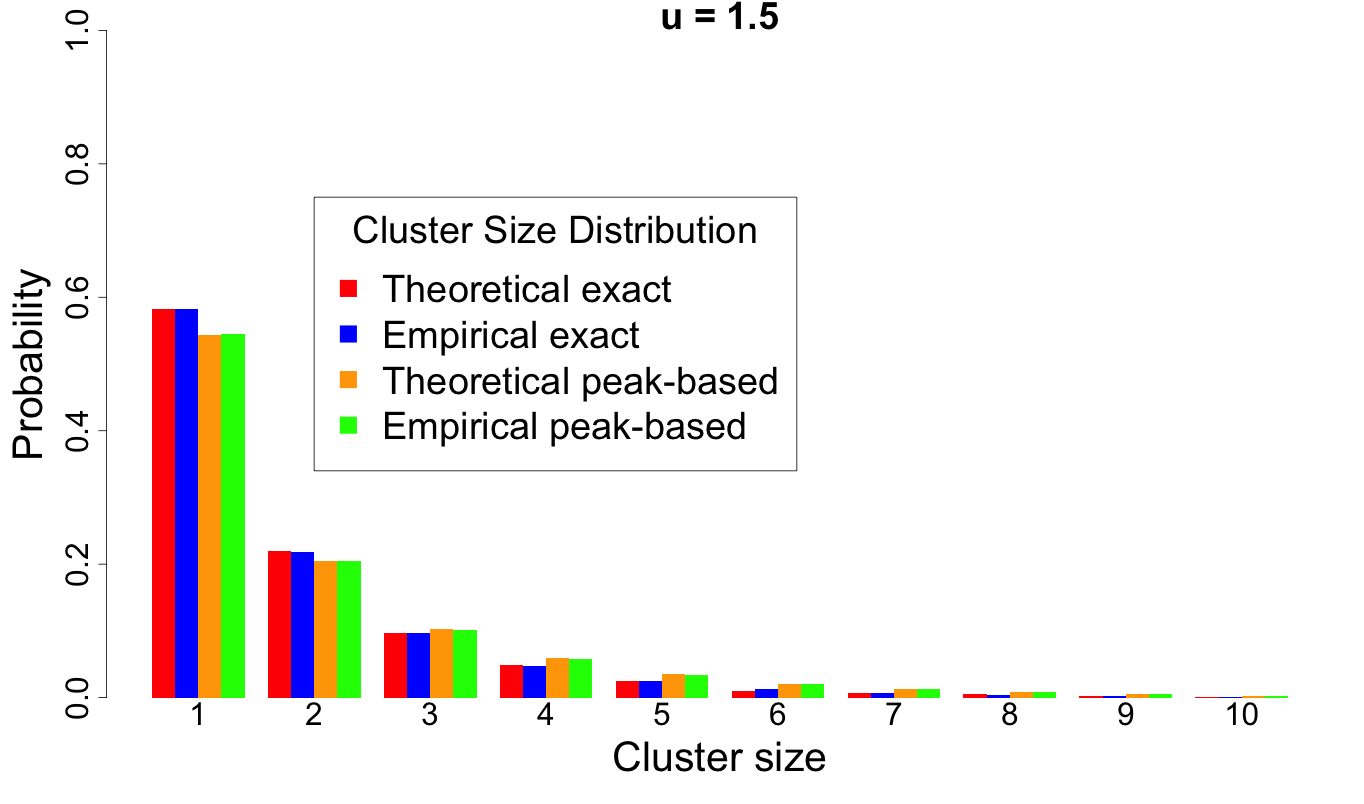}
		\end{tabular}
	\end{minipage}
	
	\vspace{4mm}
	
	\begin{minipage}{\textwidth}
		\centering
		(b) Theoretical and empirical values of $w_k$, $w_k^{\rm peak}$ and probabilities (in parentheses).
		\vspace{2mm}
		
		\begin{tabular}{c c l l l l}
			\toprule
			& Size $k$
			& $w_k \left(\frac{w_k}{\sum w_k}\right)$
			& $\widehat{w}_k \left(\frac{\widehat{w}_k}{\sum \widehat{w}_k}\right)$
			& $w_k^{\rm peak} \left(\frac{w_k^{\text{peak}}}{\sum w_k^{\text{peak}}}\right)$
			& $\widehat{w}_k^{\rm peak} \left(\frac{\widehat{w}_k^{\text{peak}}}{\sum \widehat{w}_k^{\text{peak}}}\right)$ \\
			\midrule
			
			\multirow{11}{*}{\rotatebox{90}{$u=0.5$}}
			& 1  & .01053 (\textbf{.31200}) & .01053 (\textbf{.31138}) & .01052 (\textbf{.13466}) & .01053 (\textbf{.13463}) \\
			& 2  & .00509 (\textbf{.15100}) & .00511 (\textbf{.15127}) & .00509 (\textbf{.06557}) & .00511 (\textbf{.06539}) \\
			& 3  & .00307 (\textbf{.09090}) & .00310 (\textbf{.09170}) & .00342 (\textbf{.04458}) & .00345 (\textbf{.04414}) \\
			& 4  & .00219 (\textbf{.06480}) & .00222 (\textbf{.06552}) & .00277 (\textbf{.03649}) & .00280 (\textbf{.03583}) \\
			& 5  & .00162 (\textbf{.04790}) & .00165 (\textbf{.04880}) & .00233 (\textbf{.03080}) & .00237 (\textbf{.03029}) \\
			& 6  & .00127 (\textbf{.03760}) & .00130 (\textbf{.03834}) & .00205 (\textbf{.02742}) & .00229 (\textbf{.02925}) \\
			& 7  & .00103 (\textbf{.03040}) & .00086 (\textbf{.02535}) & .00186 (\textbf{.02471}) & .00190 (\textbf{.02434}) \\
			& 8  & .00085 (\textbf{.02520}) & .00073 (\textbf{.02144}) & .00170 (\textbf{.02281}) & .00175 (\textbf{.02238}) \\
			& 9  & .00072 (\textbf{.02120}) & .00054 (\textbf{.01610}) & .00157 (\textbf{.02115}) & .00161 (\textbf{.02054}) \\
			& 10 & .00062 (\textbf{.01820}) & .00059 (\textbf{.01745}) & .00146 (\textbf{.01988}) & .00153 (\textbf{.01961}) \\
			& $\sum_{k=1}^\infty$ & .03371 & .03381 & .07389 & .07822 \\
			\midrule
			
			\multirow{11}{*}{\rotatebox{90}{$u=1.5$}}
			& 1  & .02095 (\textbf{.58301}) & .02100 (\textbf{.58301}) & .02095 (\textbf{.54407}) & .02090 (\textbf{.54407}) \\
			& 2  & .00788 (\textbf{.21939}) & .00785 (\textbf{.21800}) & .00788 (\textbf{.20468}) & .00790 (\textbf{.20500}) \\
			& 3  & .00350 (\textbf{.09742}) & .00346 (\textbf{.09630}) & .00393 (\textbf{.10208}) & .00392 (\textbf{.10200}) \\
			& 4  & .00175 (\textbf{.04867}) & .00173 (\textbf{.04800}) & .00226 (\textbf{.05862}) & .00224 (\textbf{.05820}) \\
			& 5  & .00090 (\textbf{.02511}) & .00089 (\textbf{.02460}) & .00134 (\textbf{.03478}) & .00132 (\textbf{.03440}) \\
			& 6  & .00033 (\textbf{.00928}) & .00048 (\textbf{.01320}) & .00081 (\textbf{.02108}) & .00081 (\textbf{.02110}) \\
			& 7  & .00026 (\textbf{.00716}) & .00026 (\textbf{.00721}) & .00049 (\textbf{.01285}) & .00051 (\textbf{.01320}) \\
			& 8  & .00018 (\textbf{.00487}) & .00015 (\textbf{.00405}) & .00032 (\textbf{.00823}) & .00031 (\textbf{.00806}) \\
			& 9  & .00007 (\textbf{.00206}) & .00008 (\textbf{.00227}) & .00020 (\textbf{.00508}) & .00019 (\textbf{.00498}) \\
			& 10 & .00005 (\textbf{.00130}) & .00005 (\textbf{.00130}) & .00012 (\textbf{.00305}) & .00012 (\textbf{.00306}) \\
			& $\sum_{k=1}^\infty$ & .03590 & .03600 & .03850 & .03850 \\
			\bottomrule
		\end{tabular}
	\end{minipage}
	
\end{table}

\begin{table}[t!]
	\centering
	\caption{Gaussian white noise field on $\mathbb{Z}^2$
		under Moore neighbors.}
	\label{tab:2D_GRF_WN_Moore_fig_table}
	\vspace{4mm}
	\begin{minipage}{\textwidth}
		\centering
		(a) Theoretical and empirical exact, peak-based cluster size distributions.
		\vspace{4mm}
		
		\begin{tabular}{cc}
			\includegraphics[trim=10 20 20 0,clip,width=2.8in]{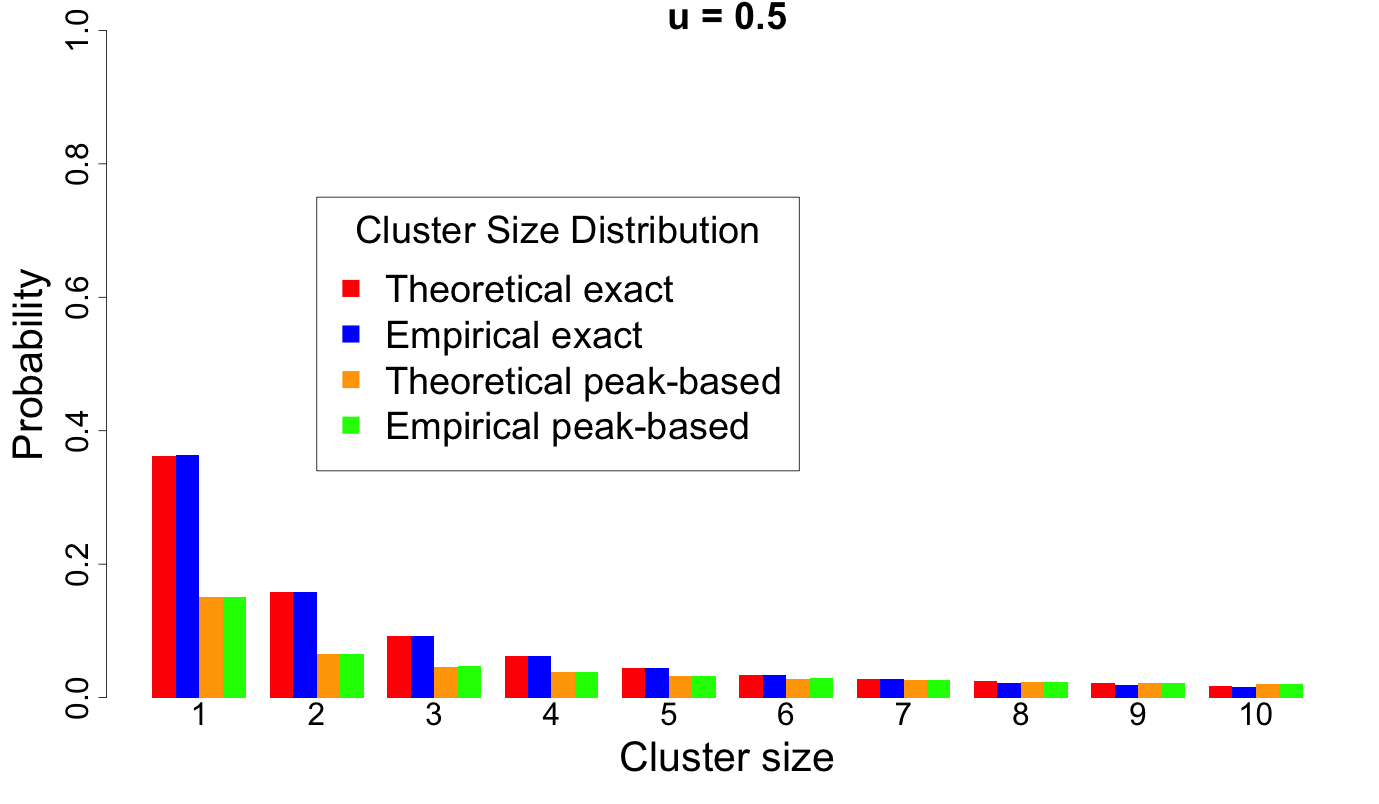} &
			\includegraphics[trim=10 20 20 0,clip,width=2.8in]{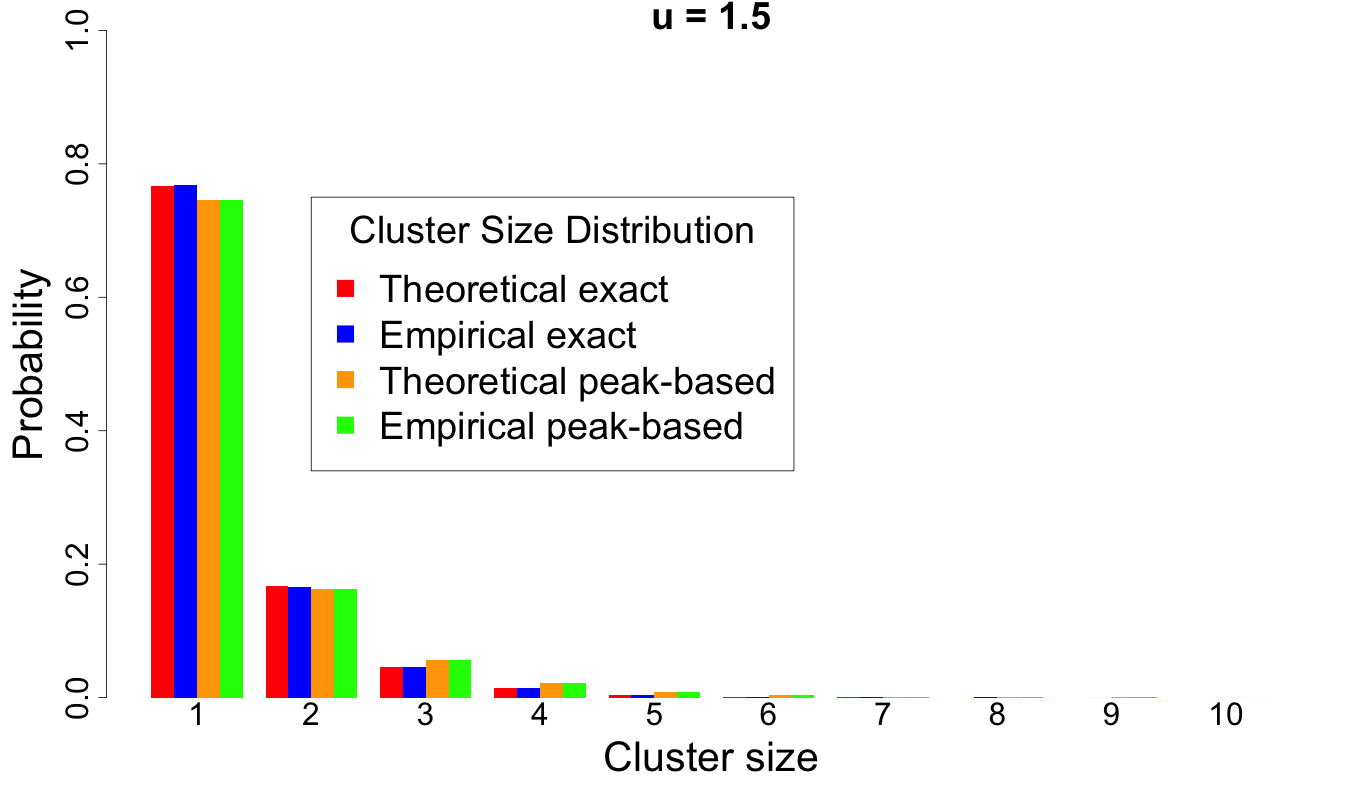}
		\end{tabular}
	\end{minipage}
	
	\vspace{4mm}
	
	\begin{minipage}{\textwidth}
		\centering
		(b) Theoretical and empirical values of $w_k$, $w_k^{\rm peak}$ and probabilities (in parentheses).
		\vspace{2mm}
		
		\begin{tabular}{c c l l l l}
			\toprule
			& Size $k$
			& $w_k \left(\frac{w_k}{\sum w_k}\right)$
			& $\widehat{w}_k \left(\frac{\widehat{w}_k}{\sum \widehat{w}_k}\right)$
			& $w_k^{\rm peak} \left(\frac{w_k^{\text{peak}}}{\sum w_k^{\text{peak}}}\right)$
			& $\widehat{w}_k^{\rm peak} \left(\frac{\widehat{w}_k^{\text{peak}}}{\sum \widehat{w}_k^{\text{peak}}}\right)$ \\
			\midrule
			
			\multirow{11}{*}{\rotatebox{90}{$u=0.5$}}
			& 1  & .01610 (\textbf{.36300}) & .01610 (\textbf{.36300}) & .01612 (\textbf{.15055}) & .01610 (\textbf{.15100}) \\
			& 2  & .00703 (\textbf{.15816}) & .00703 (\textbf{.15800}) & .00703 (\textbf{.06565}) & .00704 (\textbf{.06570}) \\
			& 3  & .00407 (\textbf{.09159}) & .00409 (\textbf{.09210}) & .00496 (\textbf{.04632}) & .00499 (\textbf{.04660}) \\
			& 4  & .00274 (\textbf{.06162}) & .00275 (\textbf{.06200}) & .00404 (\textbf{.03773}) & .00407 (\textbf{.03800}) \\
			& 5  & .00199 (\textbf{.04477}) & .00200 (\textbf{.04510}) & .00347 (\textbf{.03237}) & .00349 (\textbf{.03260}) \\
			& 6  & .00152 (\textbf{.03425}) & .00153 (\textbf{.03440}) & .00306 (\textbf{.02861}) & .00308 (\textbf{.02880}) \\
			& 7  & .00121 (\textbf{.02718}) & .00122 (\textbf{.02740}) & .00278 (\textbf{.02597}) & .00279 (\textbf{.02600}) \\
			& 8  & .00114 (\textbf{.02559}) & .00098 (\textbf{.02220}) & .00253 (\textbf{.02360}) & .00252 (\textbf{.02360}) \\
			& 9  & .00094 (\textbf{.02124}) & .00082 (\textbf{.01840}) & .00234 (\textbf{.02183}) & .00231 (\textbf{.02160}) \\
			& 10 & .00076 (\textbf{.01710}) & .00070 (\textbf{.01570}) & .00214 (\textbf{.01996}) & .00216 (\textbf{.02020}) \\
			& $\sum_{k=1}^\infty$ & .04446 & .04432 & .10502 & .10707 \\
			\midrule
			
			\multirow{11}{*}{\rotatebox{90}{$u=1.5$}}
			& 1  & .03840 (\textbf{.76645}) & .03850 (\textbf{.76600}) & .03842 (\textbf{.74643}) & .03850 (\textbf{.74600}) \\
			& 2  & .00836 (\textbf{.16685}) & .00837 (\textbf{.16700}) & .00836 (\textbf{.16249}) & .00837 (\textbf{.16200}) \\
			& 3  & .00231 (\textbf{.04601}) & .00232 (\textbf{.04620}) & .00290 (\textbf{.05637}) & .00291 (\textbf{.05650}) \\
			& 4  & .00071 (\textbf{.01407}) & .00070 (\textbf{.01400}) & .00109 (\textbf{.02123}) & .00109 (\textbf{.02120}) \\
			& 5  & .00023 (\textbf{.00456}) & .00023 (\textbf{.00452}) & .00042 (\textbf{.00819}) & .00042 (\textbf{.00813}) \\
			& 6  & .00008 (\textbf{.00153}) & .00008 (\textbf{.00151}) & .00016 (\textbf{.00320}) & .00016 (\textbf{.00316}) \\
			& 7  & .00003 (\textbf{.00053}) & .00003 (\textbf{.00055}) & .00006 (\textbf{.00125}) & .00007 (\textbf{.00129}) \\
			& 8  & .00000 (\textbf{.00000}) & .00001 (\textbf{.00018}) & .00003 (\textbf{.00051}) & .00002 (\textbf{.00048}) \\
			& 9  & .00000 (\textbf{.00000}) & .00000 (\textbf{.00007}) & .00001 (\textbf{.00021}) & .00001 (\textbf{.00022}) \\
			& 10 & .00000 (\textbf{.00000}) & .00000 (\textbf{.00002}) & .00001 (\textbf{.00011}) & .00000 (\textbf{.00007}) \\
			& $\sum_{k=1}^\infty$ & .05013 & .05018 & .05148 & .05152 \\
			\bottomrule
		\end{tabular}
	\end{minipage}
	
\end{table}

\begin{table}[t!]
	\centering
	\caption{Standardized chi-squared field on $\Z^2$
		under Moore neighbors.}
	\label{tab:2D_Chisq_Moore_fig_table}
	\vspace{4mm}
	\begin{minipage}{\textwidth}
		\centering
		(a) Theoretical and empirical exact, peak-based cluster size distributions.
		\vspace{4mm}
		
		\begin{tabular}{cc}
			\includegraphics[trim=10 20 20 0,clip,width=2.8in]{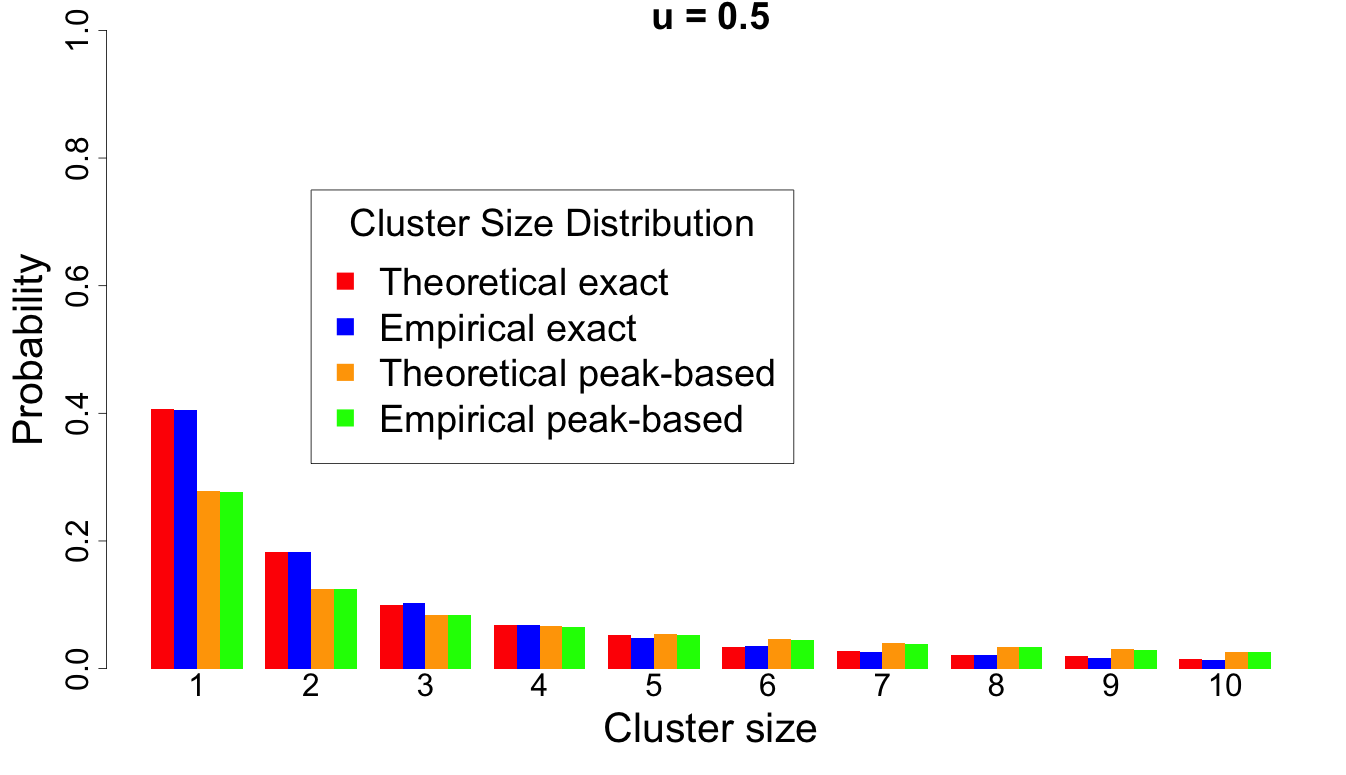} &
			\includegraphics[trim=10 20 20 0,clip,width=2.8in]{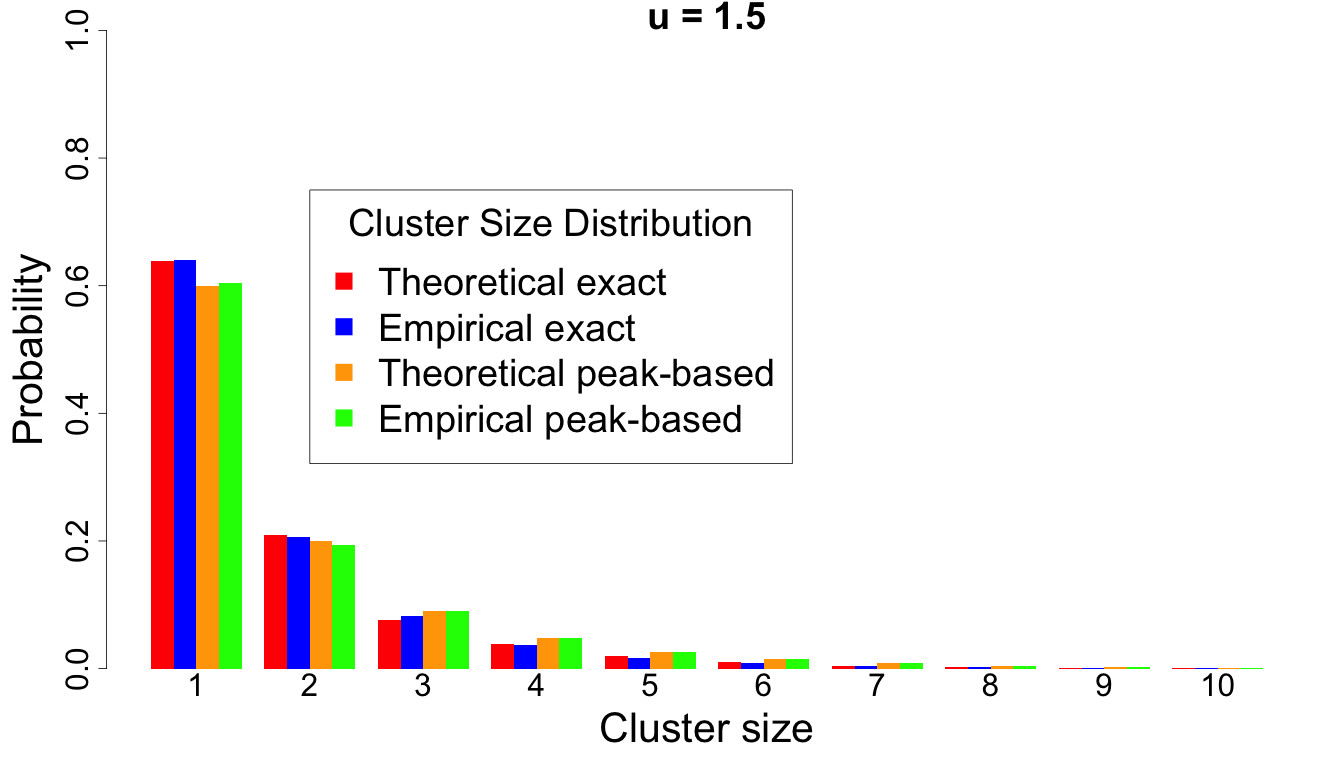}
		\end{tabular}
	\end{minipage}
	
	\vspace{4mm}
	
	\begin{minipage}{\textwidth}
		\centering
		(b) Theoretical and empirical values of $w_k$, $w_k^{\rm peak}$ and probabilities (in parentheses).
		\vspace{2mm}
		
		\begin{tabular}{c c l l l l}
			\toprule
			& Size $k$
			& $w_k \left(\frac{w_k}{\sum w_k}\right)$
			& $\widehat{w}_k \left(\frac{\widehat{w}_k}{\sum \widehat{w}_k}\right)$
			& $w_k^{\rm peak} \left(\frac{w_k^{\text{peak}}}{\sum w_k^{\text{peak}}}\right)$
			& $\widehat{w}_k^{\rm peak} \left(\frac{\widehat{w}_k^{\text{peak}}}{\sum \widehat{w}_k^{\text{peak}}}\right)$ \\
			\midrule
			
			\multirow{11}{*}{\rotatebox{90}{$u=0.5$}}
			& 1  & .02497 (\textbf{.40657}) & .02445 (\textbf{.40512}) & .02497 (\textbf{.27859}) & .02445 (\textbf{.27740}) \\
			& 2  & .01119 (\textbf{.18226}) & .01100 (\textbf{.18224}) & .01113 (\textbf{.12424}) & .01100 (\textbf{.12479}) \\
			& 3  & .00613 (\textbf{.09988}) & .00625 (\textbf{.10352}) & .00755 (\textbf{.08422}) & .00735 (\textbf{.08343}) \\
			& 4  & .00423 (\textbf{.06880}) & .00413 (\textbf{.06845}) & .00592 (\textbf{.06604}) & .00577 (\textbf{.06546}) \\
			& 5  & .00322 (\textbf{.05244}) & .00289 (\textbf{.04789}) & .00487 (\textbf{.05436}) & .00471 (\textbf{.05339}) \\
			& 6  & .00207 (\textbf{.03365}) & .00212 (\textbf{.03519}) & .00410 (\textbf{.04570}) & .00397 (\textbf{.04506}) \\
			& 7  & .00169 (\textbf{.02745}) & .00160 (\textbf{.02658}) & .00353 (\textbf{.03943}) & .00339 (\textbf{.03850}) \\
			& 8  & .00135 (\textbf{.02198}) & .00124 (\textbf{.02058}) & .00307 (\textbf{.03430}) & .00293 (\textbf{.03327}) \\
			& 9  & .00119 (\textbf{.01936}) & .00098 (\textbf{.01621}) & .00270 (\textbf{.03011}) & .00255 (\textbf{.02890}) \\
			& 10 & .00094 (\textbf{.01531}) & .00078 (\textbf{.01295}) & .00235 (\textbf{.02627}) & .00223 (\textbf{.02530}) \\
			& $\sum_{k=1}^\infty$
			& .06141 & .05940 & .08962 & .08638 \\
			\midrule
			
			\multirow{11}{*}{\rotatebox{90}{$u=1.5$}}
			& 1  & .03164 (\textbf{.63945}) & .03110 (\textbf{.64100}) & .03164 (\textbf{.60003}) & .03110 (\textbf{.60400}) \\
			& 2  & .01037 (\textbf{.20958}) & .01000 (\textbf{.20600}) & .01057 (\textbf{.20052}) & .01000 (\textbf{.19400}) \\
			& 3  & .00373 (\textbf{.07544}) & .00397 (\textbf{.08180}) & .00477 (\textbf{.09049}) & .00464 (\textbf{.09000}) \\
			& 4  & .00193 (\textbf{.03890}) & .00178 (\textbf{.03680}) & .00254 (\textbf{.04820}) & .00247 (\textbf{.04790}) \\
			& 5  & .00094 (\textbf{.01900}) & .00083 (\textbf{.01720}) & .00138 (\textbf{.02609}) & .00135 (\textbf{.02610}) \\
			& 6  & .00050 (\textbf{.01010}) & .00041 (\textbf{.00838}) & .00080 (\textbf{.01516}) & .00075 (\textbf{.01460}) \\
			& 7  & .00019 (\textbf{.00375}) & .00020 (\textbf{.00413}) & .00044 (\textbf{.00834}) & .00042 (\textbf{.00816}) \\
			& 8  & .00009 (\textbf{.00177}) & .00010 (\textbf{.00214}) & .00025 (\textbf{.00468}) & .00024 (\textbf{.00469}) \\
			& 9  & .00003 (\textbf{.00067}) & .00005 (\textbf{.00109}) & .00014 (\textbf{.00270}) & .00014 (\textbf{.00265}) \\
			& 10 & .00004 (\textbf{.00081}) & .00003 (\textbf{.00057}) & .00009 (\textbf{.00164}) & .00008 (\textbf{.00150}) \\
			& $\sum_{k=1}^\infty$ & .04950 & .04850 & .05274 & .05130 \\
			\bottomrule
		\end{tabular}
	\end{minipage}
	
\end{table}

\clearpage

\section*{Acknowledgments}
The authors were supported by NSF Grant DMS-2220523 and Simons Foundation Collaboration Grant \#854127. We thank Prof. Guofeng Cao and Yu Zhou from the Department of Geography at University of Colorado Boulder for motivating this research topic.

\begin{small}
	
\end{small}

\bigskip

\begin{quote}
	\begin{small}
		
		\textsc{Dan Cheng and John Ginos}\\
		School of Mathematical and Statistical Sciences \\
		Arizona State University\\
		900 S. Palm Walk\\
		Tempe, AZ 85281, U.S.A.\\
		E-mails: \texttt{chengdan@gmail.com; jginos@asu.edu}

	\end{small}
\end{quote}
	
\end{document}